\documentclass[letterpaper, 11pt,  reqno]{amsart}

\usepackage{amsmath,amssymb,amscd,amsthm,amsxtra, esint}

\usepackage{hyperref}

\headheight=8pt
\topmargin=0pt
\textheight=624pt
\textwidth=432pt
\oddsidemargin=18pt
\evensidemargin=18pt

\allowdisplaybreaks[2]

\sloppy

\hfuzz  = 0.5cm %allows mathformula to wiggle a bit

%\includeonly{biblio1}

\setlength{\pdfpagewidth}{8.50in}
\setlength{\pdfpageheight}{11.00in}

\newtheorem{theorem}{Theorem} [section]

\newtheorem{lemma}[theorem]{Lemma}
\newtheorem{proposition}[theorem]{Proposition}
\newtheorem{remark}[theorem]{Remark}

\newtheorem{definition}[theorem]{Definition}
\newtheorem{corollary}[theorem]{Corollary}

%Lower/Upper bound appears below /above the integral sign

\DeclareMathOperator*{\supp}{supp}

%Roman I

%Roman II

%Roman III

\newcommand{\noi}{\noindent}
\newcommand{\Z}{\mathbb{Z}}
\newcommand{\R}{\mathbb{R}}
\newcommand{\C}{\mathbb{C}}
\newcommand{\T}{\mathbb{T}}

\let\Re=\undefined\DeclareMathOperator*{\Re}{Re}
\let\Im=\undefined\DeclareMathOperator*{\Im}{Im}

\let\P= \undefined
\newcommand{\P}{\mathbf{P}}

\newcommand{\E}{\mathbb{E}}

\renewcommand{\L}{\mathcal{L}}

\newcommand{\F}{\mathcal{F}}

\newcommand{\al}{\alpha}

\newcommand{\dl}{\delta}

\newcommand{\Dl}{\Delta}
\newcommand{\eps}{\varepsilon}

\newcommand{\g}{\gamma}
\newcommand{\G}{\Gamma}

\newcommand{\s}{\sigma}
\newcommand{\Si}{\Sigma}
\newcommand{\ft}{\widehat}

\newcommand{\wt}{\widetilde}
\newcommand{\cj}{\overline}
\newcommand{\dx}{\partial_x}
\newcommand{\dt}{\partial_t}

\renewcommand{\l}{\ell}
\renewcommand{\o}{\omega}
\renewcommand{\O}{\Omega}

\newcommand{\les}{\lesssim}
\newcommand{\ges}{\gtrsim}

%Japanese Bracket
\newcommand{\jb}[1]
{\langle #1 \rangle}

\newcommand{\ind}{\mathbf 1}

\newcommand{\N}{\mathbb{N}}

\newtheorem*{ackno}{Acknowledgements}

\newcommand{\Pk}{\mu_k}

\newcommand{\Pkn}{\mu_{k, N}}

\numberwithin{equation}{section}
\numberwithin{theorem}{section}

\begin{document}
\baselineskip = 14pt

\title[On invariant Gibbs measures for gKdV]
{On invariant Gibbs measures for the generalized KdV equations}

\author[T.~Oh, G.~Richards, L.~Thomann]
{Tadahiro Oh, Geordie Richards, and Laurent Thomann}

\address{
Tadahiro Oh, School of Mathematics\\
The University of Edinburgh\\
and The Maxwell Institute for the Mathematical Sciences\\
James Clerk Maxwell Building\\
The King's Buildings\\
Peter Guthrie Tait Road\\
Edinburgh\\ 
EH9 3FD\\
 United Kingdom}

\email{hiro.oh@ed.ac.uk}

\address{
Geordie Richards\\
  Department of Mathematics\\
University of Rochester\\
915 Hylan Building\\
RC Box 270138, Rochester, NY 14627\\
USA} 

\email{g.richards@rochester.edu}

\address{
Laurent Thomann\\
Institut  \'Elie Cartan, Universit\'e de Lorraine, B.P. 70239,
F-54506 Vand\oe uvre-l\`es-Nancy Cedex, France}

\email{laurent.thomann@univ-lorraine.fr}

\subjclass[2010]{35Q53}

\keywords{generalized KdV equation; Gibbs measure; 
Hermite polynomial; white noise functional}

\begin{abstract}

We consider the %defocusing 
generalized KdV equations on the circle.
In particular, we construct global-in-time solutions
with initial data distributed according to the Gibbs measure
and show that the law of the random solutions, at any time,
is again given by the Gibbs measure.
In handling a nonlinearity of an arbitrary high degree, 
we make use of the Hermite polynomials and
the white noise functional.

\end{abstract}

%\date{\today}
%%
%
\maketitle
%
%\tableofcontents

\baselineskip = 14pt

\section{Introduction}

\subsection{Generalized KdV equations}

We consider 
the generalized KdV equation (gKdV) on the circle:
\begin{align}
\begin{cases}
\dt u + \dx^3 u = \pm %\frac{1}{k} 
\dx (u^k)\\
u|_{t = 0} = u_0,
\end{cases}
\qquad ( t, x) \in \R \times \T,
\label{gKdV1}
\end{align}

\noi
where $k \geq 2$ is an integer 
and $u$ is a real-valued function 
on $\R\times \T$ with $\T = \R/(2\pi \Z)$.
When $k = 2$ and $3$, 
the equation\;\eqref{gKdV1} 
corresponds to
the famous Korteweg-de Vries equation (KdV)
and the modified KdV equation (mKdV), respectively,
and has been studied extensively from both
theoretical and applied points of view.

The gKdV equation\;\eqref{gKdV1} is known to possess 
the following Hamiltonian structure:
\[ \dt u = \dx \frac{d \mathcal{E}(u)}{du}\]

\noi
where $\mathcal{E}(u)$ is  the Hamiltonian given by 
\begin{align*}
\mathcal{E}(u) = \frac{1}{2}\int_\T (\dx u)^2dx
\pm \frac1{k+1} \int_\T u^{k+1} dx.
\end{align*}

\noi
In particular, $\mathcal{E}(u)$ is conserved under the dynamics of\;\eqref{gKdV1}.
Moreover, 
the spatial mean $\int_\T u \, dx$
and the mass $M(u) = \int_\T u^2 \, dx$
are also conserved.
While 
KdV ($k = 2$) and mKdV ($k = 3$) are
 known to be completely 
integrable and thus possess
infinitely many conservation laws, 
there are no other known conservation laws for higher values of $k \geq 4$.

In view of the conservation of the spatial mean,  
we assume that both the initial condition\;$u_0$ 
and the solution\;$u$ are of spatial mean 0 in the following.
In other words, 
defining the Fourier coefficient $\ft f(n)$
by 
\[ \ft f(n) = \F(f)(n) = \int_\T f(x) e^{- inx} dx,\]
we will work on real-valued functions of the form:\footnote{Hereafter, we drop
the harmless $2\pi$.}
\[f(x) = \frac{1}{2\pi}\sum_{n \in \Z^*} \ft f(n) e^{inx}\]

\noi
where 
$\Z^* = \Z \setminus \{0\}$.

\subsection{Gibbs measures}

Consider  the following Hamiltonian dynamics on $\R^{2n}$:
\begin{equation} \label{HR2}
\textstyle \dot{p}_j = \frac{\partial \mathcal{E}}{\partial q_j} 
\qquad
\text{and}
\qquad 
\dot{q}_j = - \frac{\partial \mathcal{E}}{\partial p_j}
\end{equation}

\noi
 with Hamiltonian $ \mathcal{E} (p, q)= \mathcal{E}(p_1, \cdots, p_n, q_1, \cdots, q_n)$.
Liouville's theorem states that the Lebesgue measure
$\prod_{j = 1}^n dp_j dq_j$
on $\mathbb{R}^{2n}$ is invariant under the dynamics.
Then, it follows from the conservation of the Hamiltonian $\mathcal{E}$
that  the Gibbs measure
$e^{- \mathcal{E}(p, q)} \prod_{j = 1}^{n} dp_j dq_j$ is invariant under the dynamics of~\eqref{HR2}.

In view of the Hamiltonian structure of gKdV\;\eqref{gKdV1}, 
we expect 
 the Gibbs measure of the form:\footnote{In the following, 
$Z$,  $Z_N$, and etc.~denote various  normalizing constants
so that the corresponding measures are probability measures when appropriate.}
\begin{align}
d\Pk = Z^{-1} \exp(-  \mathcal{E}(u))du
 = Z^{-1} e^{\mp\frac 1{k+1} \int_\T u^{k+1} dx} e^{-\frac 12 \int_\T (\dx  u )^2 dx} du
\label{G1}
 \end{align}

\noi
to be invariant under the dynamics of~\eqref{gKdV1}.
As it stands,\;\eqref{G1} is merely a formal expression.
It turns out, however, that 
 the Gibbs measure $\Pk$ in\;\eqref{G1} can be defined 
as a  probability measure absolutely continuous with respect to the 
following 
Gaussian measure:
\begin{align}
 d\rho =  Z_0^{-1} e^{-\frac 12 \int_\T (\dx u)^2 dx } du.
\label{G3}
\end{align}

\noi
Note that 
 $\rho$ in~\eqref{G3}
 is  the induced probability measure
under the map:
\begin{align}
 \o \in \O \longmapsto u(x) = u(x; \o) = \sum_{n \in \Z^*} \frac{g_n(\o)}{|n|}e^{in x},
\label{G5}
 \end{align}

\noi
where $\{g_n\}_{n \in \N}$
is a sequence of independent standard\footnote{Namely, $g_n$ has mean 0
and variance 1.} 
complex-valued Gaussian
random variables on a probability space $(\O, \F, P)$
conditioned  that $g_{-n} = \cj {g_n}$, $n \in \N$.
The random function $u$ under $\rho$,  
represented by the Fourier-Wiener series in~\eqref{G5}, 
corresponds to 
  a mean-zero Brownian loop (periodic Wiener process) on $\T$.
 See\;\cite{Benyi}. 
Therefore, we refer to $\rho$
as the (mean-zero periodic) Wiener measure in the following.
Lastly, note that 
$u$ in~\eqref{G5}
lies in $H^{s}(\T)\setminus H^\frac{1}{2}(\T)$ for any $s < \frac 12 $, almost surely.

From Sobolev's inequality, we see 
that $\int_\T(u(x; \o))^{k+1} dx$ is finite almost surely.
Hence, 
in the defocusing case, i.e.~with the $+$ sign in\;\eqref{gKdV1}
and an odd integer $k\geq 3$, 
the Gibbs measure $\mu_k$
is a well-defined probability measure 
on $H^s(\T)$, $s < \frac{1}{2}$,
absolutely continuous with respect to $\rho$.

Next, let us discuss the  non-defocusing case,
i.e.~either
$k$ is even or we have  the $-$ sign in \eqref{gKdV1}, 
(corresponding to the $+$ sign in \eqref{G1}).
In this case, 
 one encounters a normalization issue in\;\eqref{G1} since $\int_{\T} u^{k+1} dx$ is unbounded.
In particular, the weight 
$e^{\mp\frac 1{k+1} \int_\T u^{k+1} dx}$
is not integrable with respect to the Gaussian measure $\rho$ in \eqref{G3}.
%The Gibbs measure $\mu_k$ in \eqref{G1}
In\;\cite{LRS}, 
Lebowitz-Rose-Speer proposed to 
insert a mass cutoff $\ind_{\{\int u^2 dx\leq R\}}$
and consider the Gibbs measure $\mu_k$ of the following form:
\begin{align}
d\Pk 
& = Z^{-1}\ind_{\{\int u^2 dx\leq R\}} \exp(-  \mathcal{E}(u))du \notag\\
&  = Z^{-1}\ind_{\{\int u^2 dx\leq R\}} e^{\mp\frac 1{k+1} \int_\T u^{k+1} dx} e^{-\frac 12 \int_\T (\dx  u )^2 dx} du.
\label{G1a}
 \end{align}

\noi
They showed that one can realize the Gibbs measure $\mu_k$ in\;\eqref{G1a} 
as a  probability measure 
on $H^s(\T)$, $s < \frac{1}{2}$, 

\begin{itemize}
\item for any mass cutoff size $R>0$ when $1< k \leq 5$, and
\item for sufficiently small $R>0$ is  when $k = 5$.

\end{itemize}

\noi
Moreover, it was shown in \cite{LRS} that 
the Gibbs measure $\mu_k$ is non-normalizable %(i.e.~$Z = \infty$) 
when $k > 5$
or for $R \gg 1$ when $k = 5$.
Lastly, we point out that the critical value $k = 5$
corresponds the smallest power of the nonlinearity 
where 
\eqref{gKdV1} on the real line possesses
finite time blowup solutions
 \cite{MM, M}.

%strictly smaller than 
%the mass of the ground state on the real line 

In the seminal work\;\cite{BO94}, 
Bourgain proved the invariance of the Gibbs measures
for KdV $(k = 2)$ and mKdV ($k = 3$).
Here, by invariance, we mean that 
\begin{align}
\mu_k \big(\Phi(-t) A\big) = \mu_k (A)
\label{inv1}
\end{align}

\noi
for any measurable set $A \in \mathcal{B}_{H^s(\T)}$, $s < \frac 12$, 
and any $t \in \R$, 
where $\Phi(t): u_0 \in H^s(\T) \mapsto 
u(t) = \Phi(t) u_0 \in H^s(\T)$
is a well-defined solution map to\;\eqref{gKdV1}, at least almost surely with respect to 
$\mu_k$.
While KdV was known to be globally well-posed
in $L^2(\T)\supset \supp \mu_2$, as shown in\;\cite{BO93},  
there was no well-posedness result for mKdV
in the support of the Gibbs measure.
In\;\cite{BO94}, Bourgain first established 
local well-posedness of mKdV in 
$H^s(\T) \cap \F L^{s_1, \infty}(\T) \supset \supp \mu_3$
for some $s < \frac 12 < s_1 < 1$,
where $\| f\|_{\F L^{s_1, \infty}(\T)} = \| \jb{n}^{s_1} \ft f(n)\|_{\l^\infty_n}$.
He then used a probabilistic argument
to construct almost sure global-in-time dynamics.
In fact, 
the  main novelty of the paper\;\cite{BO94}
is this globalization argument, exploiting
the invariance of the finite dimensional Gibbs measures
for the finite dimensional approximations to a given PDE.
There have been many results 
on the construction of invariant Gibbs measures
for Hamiltonian PDEs
that 
followed and further developed the approach in\;\cite{BO94}.
See for example\;\cite{BO96, BO97,  
TZ1, TZ2, BTIMRN, BT2, OH3,  OHSBO,  Tzv, TTz, NORS,
 SuzzoniBBM, BTT, BB1, DengBO, R}. 
 We also refer to the book by Zhidkov\;\cite[Chapter 4]{Zhid2} for the construction of infinitely many invariant measures for KdV associated to the conservation laws of the equation at different levels of Sobolev regularity.

In\;\cite{R}, the second author 
considered the same problem for the quartic gKdV ($k = 4$).
As in the case of mKdV, the main challenge in\;\cite{R} was the construction of the local-in-time dynamics
in the support of the Gibbs measure.
Following the approach in\;\cite{OH6}, 
the second author constructed almost sure local-in-time dynamics 
by establishing a probabilistic a priori estimate
and proved the invariance of the Gibbs measure.\footnote{Strictly speaking, 
the invariance of the Gibbs measure in\;\cite{R}
was shown only for the gauged quartic gKdV.
See Remark\;\ref{REM:quartic} below.}

\subsection{Main result}

We now state our main result.
In particular, 
for large values of $k \geq 5$, 
this theorem addresses 
the invariance 
of the Gibbs measures for gKdV\;\eqref{gKdV1} 
for the first time.

\begin{theorem}\label{THM:1}

Assume one of the following conditions:
\begin{itemize}
\item[\textup{(i)}] 
defocusing gKdV:  with the $+$ sign in\;\eqref{gKdV1} and odd $k \geq 3$, or

\vspace{1mm}
\item[\textup{(ii)}] 
non-defocusing gKdV:  $2 \leq k \leq 5$.
When $k = 5$, the mass threshold $R > 0$ is sufficiently small.

\end{itemize}

\noi
Then, 
given any $s < \frac 12$,
there exists a set $\Si = \Si(s)$ of full measure
with respect to $\Pk$
such that for every $\phi \in \Si$, 
the  generalized KdV equation\;\eqref{gKdV1} with mean-zero initial condition $u(0) = \phi$
has a global-in-time solution 
$ u \in C(\R; H^s(\T))$.
%for any $s < \frac 12$.
Moreover, for all $t \in \R$, 
the law of the random function $u(t)$ is given by~$\Pk$.

\end{theorem}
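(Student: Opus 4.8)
The plan is to follow the globalization scheme of Bourgain\;\cite{BO94}, but to supply the missing deterministic local theory---unavailable for a high-degree nonlinearity at the sub-$H^{\frac12}$ regularity of $\supp\mu_k$---by a probabilistic local theory built on the Hermite-polynomial and white-noise calculus advertised in the abstract. First I would introduce the sharp frequency projection $P_N$ onto $\{n\in\Z^* : |n|\le N\}$ and study the truncated equation
\[
\dt u + \dx^3 u = \pm P_N \dx\big((P_N u)^k\big),
\]
whose low-frequency part is a finite-dimensional Hamiltonian system with Hamiltonian $\mathcal{E}_N(u)=\frac12\int_\T(\dx P_N u)^2\,dx \pm \frac1{k+1}\int_\T(P_N u)^{k+1}\,dx$, the high modes evolving by the free Airy flow. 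By Liouville's theorem together with the conservation of $\mathcal{E}_N$ and of the truncated mass, the measure
\[
d\mu_{k,N}= Z_N^{-1}\,\ind_{\{\int_\T (P_N u)^2 dx\le R\}}\, e^{\mp\frac1{k+1}\int_\T (P_N u)^{k+1} dx}\,d\rho
\]
(with the mass cutoff present only in the non-defocusing case (ii)) is invariant under the truncated flow $\Phi_N$. One also checks that $\mu_{k,N}\to\mu_k$, using $P_N u\to u$ almost surely under $\rho$ together with the integrability of the densities.

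The heart of the argument, and its main obstacle, is the probabilistic local well-posedness, uniform in $N$. Writing $u=z+v$ with $z(t)=e^{-t\dx^3}\phi$ the random linear evolution---which under $\mu_k$ inherits the Gaussian structure of $\rho$, since $e^{-t\dx^3}$ only multiplies each mode $\ft\phi(n)$ by the unimodular phase $e^{itn^3}$---one expands $(z+v)^k$ in the Duhamel formulation. The purely stochastic pieces $z^j$ are products of a large number of Gaussians, and a direct estimate loses too much regularity because of the derivative $\dx$ in front of $u^k$ at $s<\frac12$. The key is to expand these products into their Wiener chaos components by means of Hermite polynomials and to bound each homogeneous chaos by hypercontractivity; the white-noise functional provides the natural framework, since $\dx z$ is spatial white noise transported by the Airy flow. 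This yields a probabilistic a priori estimate for $v$ in a suitable $X^{s,b}$-type norm on a short interval $[0,\tau]$, valid on a set of large $\mu_k$-measure and uniformly in $N$.

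With the local bound in hand, the globalization is Bourgain's almost-invariance argument: the exact invariance of the finite-dimensional measures $\mu_{k,N}$ allows one to iterate the local estimate over $\lfloor T/\tau\rfloor$ steps while keeping the total failure probability summable, so that for every $\eps>0$ and $T>0$ there is a set of $\mu_{k,N}$-measure $\ge 1-\eps$, uniform in $N$, on which $\Phi_N$ exists on $[-T,T]$ with controlled norm. A compactness argument then extracts a limit $u\in C([-T,T];H^s(\T))$ solving\;\eqref{gKdV1}; intersecting the resulting sets over $T\in\N$ and $\eps=2^{-m}$ produces the announced full-measure set $\Sigma$ on which the solution is global.

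Finally, to obtain the invariance\;\eqref{inv1}, I would pass the identity $\mu_{k,N}(\Phi_N(-t)A)=\mu_{k,N}(A)$ to the limit, combining the convergence $\mu_{k,N}\to\mu_k$ with the convergence $\Phi_N(t)\to\Phi(t)$ established above on sets of large measure; this shows that the law of $u(t)$ is $\mu_k$ for every $t\in\R$. I expect the local step to be by far the hardest: controlling $\dx(u^k)$ for arbitrary $k$ at regularity below $H^{\frac12}$, where deterministic multilinear estimates are false and one must extract the hidden probabilistic smoothing from the Gaussian structure through the Hermite polynomial and white-noise calculus.
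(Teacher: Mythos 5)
There is a genuine gap, and it sits exactly where you predicted the difficulty would be: the ``probabilistic local well-posedness, uniform in $N$'' step does not exist, and the paper is explicit that it is out of reach. Your plan is Bourgain's scheme from \cite{BO94}: decompose $u = z+v$, expand $(z+v)^k$ in Wiener chaoses, close a fixed point for $v$ in an $X^{s,b}$-type space, then globalize by almost invariance. But hypercontractivity bounds on the stochastic pieces $z^j$ do not by themselves produce the multilinear smoothing needed to absorb the derivative in $\dx(u^k)$ at $s<\frac12$: for gKdV the resonant interactions (the $\ft{u^{k-1}}(0)\,\dx u$-type terms) are not summable there, which is why even the deterministic theory at $s\geq\frac12$ \cite{ST, CKSTT2} and the probabilistic construction for $k=4$ in \cite{R} require a gauge transform, and why the solution map fails to be smooth below $H^{1/2}$. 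As Remark~\ref{REM:quartic} records, for $k\geq 5$ this case-by-case multilinear analysis ``gets out of control'' and no uniform-in-$k$ probabilistic local theory is known; moreover the gauge transform, even where available, obstructs the invariance statement for the ungauged equation \eqref{gKdV1} itself. So the iteration you build on top of the local estimate never gets started, and your final step --- passing $\mu_{k,N}(\Phi_N(-t)A)=\mu_{k,N}(A)$ to the limit via $\Phi_N(t)\to\Phi(t)$ --- presupposes a limit flow $\Phi(t)$ that is not constructed; note the theorem deliberately claims only that $\L(u(t)) = \mu_k$ for each $t$, which is weaker than \eqref{inv1}, precisely because no uniqueness or flow is available.

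The paper's actual route avoids local well-posedness altogether. The Hermite/white-noise machinery is used not to run a contraction argument but to prove Proposition~\ref{PROP:main}: the uniform bound $\|F_N(u)\|_{L^p(\rho;H^s)}\leq C_{k,s}(p-1)^{k/2}$ and the convergence $F_N(u)\to u^k$ in $L^p(\rho;H^s)$ for every $s<\frac12$, with the orthogonality of Hermite polynomials under the white noise functional (Lemma~\ref{LEM:W1}) replacing the $k$-dependent combinatorics. Combined with the invariance of $\mu_{k,N}$ under the truncated flow (Lemma~\ref{LEM:global}), this gives uniform space-time moment bounds, hence tightness of the measures $\nu_N = \mu_{k,N}\circ\Phi_N^{-1}$ on $C(\R;H^s)$ (Proposition~\ref{PROP:tight}); Prokhorov (Lemma~\ref{LEM:Pro}) and Skorokhod (Lemma~\ref{LEM:Sk}) then produce, on a new probability space, almost surely convergent copies $\wt{u^{N_j}}\to u$, and one verifies that the limit solves \eqref{gKdV1} in the distributional sense with $\L(u(t))=\mu_k$ (Proposition~\ref{PROP:end}). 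This is the Albeverio--Cruzeiro/Da Prato--Debussche compactness method as implemented in \cite{BTT1, OT1}: it buys global existence for all $k$ at the price of uniqueness and of the stronger invariance \eqref{inv1}. If you want to salvage your write-up, Proposition~\ref{PROP:main} is the statement your chaos-expansion instincts should be aimed at, with the weak-solution compactness scaffolding replacing the local theory you could not supply.
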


Theorem~\ref{THM:1} asserts two statements:
global 
existence of solutions (without uniqueness) and invariance of $\Pk$.
Regarding the invariance part, 
Theorem~\ref{THM:1} only claims that,  
given any $t \in \R$, the law $\L(u(t))$ of the $H^s$-valued random variable $u(t)$
is given by the Gibbs measure $\Pk$.
This implies the invariance property of the Gibbs measure $\Pk$
in some mild sense, but 
it is weaker than the actual invariance described in\;\eqref{inv1}.
While the  well-posedness results for gKdV in low regularity setting \cite{ST, CKSTT2, R} are obtained via gauge transforms, 
 we work directly on the equation\;\eqref{gKdV1} in the following.
This is crucial to study  the invariance property of the Gibbs measure $\mu_k$. 
See Remark\;\ref{REM:quartic} for more details.

\medskip

A precursor to the existence part of Theorem~\ref{THM:1}
appears in the work by the third author with Burq and Tzvetkov\;\cite{BTT0},
where they 
used the energy conservation and a regularization 
property under randomization
to construct global-in-time solutions
to the cubic NLW on\;$\T^d$ for $d \geq 3$.
The main ingredient in~\cite{BTT0} is the compactness of
the solutions to the approximating PDEs.

We prove Theorem\;\ref{THM:1}
by following the approach
presented
in the work
 by the third author with Burq and Tzvetkov~\cite{BTT1}, 
which  was in turn  motivated
by the works of Albeverio-Cruzeiro~\cite{AC}
and Da Prato-Debussche~\cite{DPD}
in the study of fluids. This method allows us to construct global dynamics, even for very rough initial conditions\;\cite{AC,GHT2}. The main idea is to exploit the invariance
of the truncated Gibbs measures $\Pkn$ (see\;\eqref{TG1} below)
and construct a tight (= compact) sequence of measures $\nu_N$ on space-time functions.  
We then apply 
Skorokhod's theorem (see Lemma~\ref{LEM:Sk} below)
to construct 
global-in-time {\it weak} solutions for 
gKdV\;\eqref{gKdV1}.

In order to prove Theorem\;\ref{THM:1} following the approach in \cite{BTT1},
 one needs a uniform bound on the nonlinearity of gKdV and its truncated version (see \eqref{gKdV3} below). 
 This is quite easy, since we have ${u\in L^p}$ 
  for all $2\leq p<\infty$, 
 almost surely with respect to the Gibbs measure $\mu_k$. 
In the following, we prove a stronger regularity result on the nonlinearity.
Recalling that $u$ defined in \eqref{G5} 
lies in $H^s(\T)$
for any $s < \frac 12$ almost surely, 
we show that 
  $u^k$ also lies in $H^s(\T)$ for any $s < \frac 12$, almost surely.
See  Proposition\;\ref{PROP:main}.
We wanted to include this optimal bound in this paper, since we believe that this could be a first step towards a probabilistic strong well-posedness result for gKdV on the support of the Gibbs measure.  The main source of difficulty in proving Proposition\;\ref{PROP:main} comes from  the more  complicated combinatorics\footnote{See \cite{BTT1}
for examples of probabilistic estimates on various nonlinearities of low degrees, 
where the required combinatorics is relatively simple thanks to the low degree of the nonlinearities.
See also \cite[Appendix A]{OT1} for a concrete combinatorial computation 
for the (Wick ordered) quintic nonlinearity.} for larger values of $k$.
In order to overcome this combinatorial difficulty,
we make use of  the {\it white noise functional} (see Definition~\ref{DEF:W} below)
and the orthogonality property of the Hermite polynomials (Lemma\;\ref{LEM:W1})
and entirely avoid 
combinatorial arguments
of increasing complexity in~$k$.
This allows us to 
prove Proposition\;\ref{PROP:main}
in a concise and uniform manner.
See  Da Prato-Debussche \cite{DPD2} and Da Prato-Tubaro\;\cite{DPT1} 
for a presentation of this method in 
the context of  the stochastic quantization equation on\;$\T^2$.
See also  a  related recent work  by the first and third authors\;\cite{OT1}
on the invariant Gibbs measures
for the nonlinear Schr\"odinger equations on\;$\T^2$.

\medskip

For simplicity of the presentation, we only treat\;\eqref{gKdV1}
with the $+$ sign
and  drop the mass cutoff $\displaystyle\ind_{\{\int u^2 dx\leq R\}}$
required for normalization of the Gibbs measures
in the non-defocusing case.
Note that 
the restriction on the values of $k$ in the non-defocusing case in Theorem\;\ref{THM:1}
simply comes from the normalization of the Gibbs measures
as probability measures\;\cite{LRS, BO94}
and that it does not appear in  the proof of Theorem\;\ref{THM:1}
in an explicit manner.

\begin{remark} \rm
The mean-zero assumption 
in the construction of the Gibbs measure
(and hence in 
Theorem\;\ref{THM:1}) is not essential.
In view of the mass conservation, 
one can consider the Gibbs measure of the form:
\begin{align}
d\ft \mu_k = Z^{-1} \exp\big(-\mathcal{E}(u) - \tfrac 12 M(u)\big)du
 = Z^{-1} e^{\mp\frac 1{k+1} \int_\T u^{k+1} dx} 
 d  \ft \rho,  
\label{D1}
 \end{align}

\noi
where $\ft \rho$ denotes the Gaussian measure given by 
\begin{align*}
 d\ft \rho =  Z_0^{-1} e^{-\frac 12 \int_\T (\dx u)^2 dx  - \frac 12 \int_\T u^2 dx } du.
\end{align*}

\noi
Under $\ft \rho$, 
a typical element $u$ is represented by 
\begin{align}
u(x) = u(x; \o) = \sum_{n \in \Z^*} \frac{g_n(\o)}{\sqrt{1+n^2}}e^{in x},
\label{D2}
 \end{align}

\noi
where $\{g_n\}_{n \in \Z_{\geq 0}}$
is a sequence of independent standard complex-valued Gaussian
random variables conditioned that $g_{-n} = \cj {g_n}$, $n \in \Z_{\geq 0}$.
Unlike\;\eqref{G5}, the random function $u$ in\;\eqref{D2} 
has a spatial mean $g_0(\o)$.
We point out that 
 Theorem\;\ref{THM:1} with  exactly the same proof 
holds for the Gibbs measure $\ft \mu_k$ in\;\eqref{D1}.

\end{remark}

\begin{remark}\label{REM:quartic} \rm

(i) Let $k \geq 4$.
On the one hand,  gKdV\;\eqref{gKdV1} is known to be locally well-posed
 in $H^s(\T)$, $s \geq \frac 12$; see\;\cite{ST, CKSTT2}.
 On the other hand, 
  it is known to be mildly ill-posed
for $s < \frac 12$
 in the sense that the solution
 map fails to be smooth in this range of regularity.
See\;\cite{BO97no2, CKSTT2}.
Therefore, it is non-trivial to construct (local-in-time) solutions
in the support of the Gibbs measure.

When $k = 4$, local-in-time dynamics was constructed in a probabilistic manner
\cite{R}.
As in the deterministic case\;\cite{ST, CKSTT2}, 
this probabilistic construction of solutions was carried out
through a gauge transform and thus the uniqueness statement
 in\;\cite{R} was very mild.
While one can apply a similar probabilistic construction of solutions 
for $k \geq 5$, 
such construction requires case-by-case consideration (see\;\cite{BO96, CO, R})
and thus 
combinatorics gets out of control for large values of $k$.
At this point, there seems to be no uniform way to 
perform this probabilistic construction for all values of $k$, 
rather than working out case-by-case analysis for each fixed value of $k$.

\medskip

\noi
(ii)  For the quartic gKdV ($k = 4$), 
the second author\;\cite{R} proved 
the invariance of the Gibbs measure $\mu_4$ 
for  the following {\it gauged} gKdV:
\begin{align*}
\dt u + \dx^3 u = \pm %\frac{1}{k} 
4 \big(\P_{\ne 0} [u^{3}] \big)\dx u, 
\end{align*}

\noi
where $\P_{\ne 0}$ denotes the orthogonal projection
onto the mean-zero functions.
Through the inverse gauge transform, 
this result yields almost sure global well-posedness
of the ungauged gKdV\;\eqref{gKdV1}.
The invariance of $\mu_4$ under the dynamics of\;\eqref{gKdV1}, however, 
is unknown.
Moreover, we do not know if the Gibbs measure $\mu_k$
is absolutely continuous with respect to 
 the pushforward of $\mu_k$ under the inverse gauge transform. 
This is a sharp contrast to the situation for the derivative cubic nonlinear Schr\"odinger equation.
See \cite{NORS, NRSS}.

While Theorem\;\ref{THM:1} 
asserts the existence of global-in-time dynamics
under which $\mu_4$ is invariant, 
we do not know if these solutions coincide
with the almost sure global solutions constructed in\;\cite{R}
due to the mild uniqueness statement under the inverse gauge transform.

\end{remark}

\begin{remark}\label{REM:unique}\rm
In Theorem \ref{THM:1},  
we first fix  $s < \frac 12$
and construct a (non-unique) global solution $u \in C(\R; H^s(\T))$.
Thus, the solution $u$ may depend on the choice of $s < \frac 12$.
In fact, one can easily modify the argument
and construct a global solution $u \in \bigcap_{s< \frac 12} C(\R;H^s(\T)) $,
thus removing the dependence on a specific choice of $s< \frac 12$.
See Remark \ref{REM:unique2} below.

\end{remark}

\section{On the truncated gKdV equation} \label{SEC:2}

In this section, we introduce the truncated gKdV equation
and the truncated Gibbs measure $\mu_{k, N}$
and discuss their basic properties.
Given $N \in \N$, 
define $E_N$ and $E_N^\perp$ by 
\[E_N = \text{span}\{ e^{in\cdot x}\}_{|n|\leq N}
\qquad \text{and}\qquad
E_N^\perp = \text{span}\{ e^{in\cdot x}\}_{|n|> N}.\]

\noi
Consider the following truncated gKdV on $\T$:
\begin{align}
\dt u^N + \dx^3 u^N =  \dx \P_N\big[ (\P_N u^N)^k \big], 
\label{gKdV2}
\end{align}

\noi
where $\P_N $ denotes the Dirichlet projection onto the  frequencies $\{|n| \leq N\}$.
Letting ${v^N = \P_N u^N}$, 
we can decouple\;\eqref{gKdV2} 
into the following finite dimensional system of ODEs on $E_N$: 
\begin{align}
\dt v^N + \dx^3 v^N =  \dx \P_N\big[ (v^N)^k \big]
\label{gKdV3}
\end{align}

\noi
and the linear flow for high frequencies $\{|n| > N\}$:
\begin{align}
\dt \P_N^\perp u^N + \dx^3 \P_N^\perp u^N= 0.
\label{gKdV4}
\end{align}

\noi
Here,  $\P_N^\perp $ is the Dirichlet projection onto the high frequencies $\{|n|> N\}$.
Note that the truncated gKdV\;\eqref{gKdV2}
is a Hamiltonian PDE with 
\begin{align}
\mathcal{E}_N(u^N) = \frac{1}{2}\int_\T (\dx u^N)^2dx
+ \frac1{k+1} \int_\T (\P_N u^N)^{k+1} dx.
\label{Hamil2}
\end{align}

Associated to the truncated gKdV\;\eqref{gKdV2}, 
let us define the truncated Gibbs measure $\mu_{k, N}$ by\footnote{In the non-defocusing case, 
there is a
mass cutoff $\ind_{\{\int u^2 dx\leq R\}}$ which we omit for simplicity of the presentation.}
\begin{align}
d \mu_{k, N} 
& = Z_N^{-1} \exp (- \mathcal{E}_N(u^N)) du^N 
 = Z_N^{-1} 
R_N(u) 
d \rho (u), 
\label{TG1}
\end{align}

\noi
where $\rho$ is the Wiener measure defined in\;\eqref{G3}
and $R_N(u)$ is defined by 
\begin{align*}
R_N(u) 
:=  e^{- \frac 1{k+1} \int_\T (\P_N u )^{k+1} dx} .
\end{align*}

\noi
It basically follows from the argument in 
\;\cite{LRS, BO94}
that $R_N(u)$ converges to $R_\infty(u)$
in $L^p(\rho)$, $1\leq p < \infty$, as $N \to \infty$.
Consequently, for any $1 \leq p < \infty$, 
we have
\begin{align}
\| R_N(u) \|_{L^p(\rho)} \leq C_p < \infty, 
\label{inv8}
\end{align}

\noi
uniformly in $N \in \N$, 
and
\begin{align}
\lim_{N \to \infty} \mu_{k, N}(A) = \mu_k(A)
\label{inv9}
\end{align}

\noi
for any measurable set $A \in \mathcal{B}_{H^s(\T)}$, $s < \frac 12$.
See also\;\cite{BTT1, OT1}.

We now decompose the Wiener measure $\rho$
as  
\[\rho = \rho_N \otimes \rho_N^\perp,\] 

\noi
where $\rho_N$  
and $\rho_N^\perp$ are
the marginals of $\rho$ on $E_N$ and $E_N^\perp$, respectively.
Then,  we can write the truncated Gibbs measure $\Pkn$ in~\eqref{TG1} as 
\begin{align}
\Pkn = \ft \mu_{k, N} \otimes \rho^\perp_N,
\label{NGibbs1}
\end{align} 

\noi
where 
$\ft \mu_{k, N}$ is the finite dimensional Gibbs measure defined by 
\begin{align*}
d\ft \mu_{k, N} =  Z_N^{-1} e^{-\frac 1{k+1} \int_{\T} (\P_N u^N)^{k+1}  dx} d\rho_N.
\end{align*}

We have the following lemma on global well-posedness of 
the truncated gKdV\;\eqref{gKdV2}
and the invariance of the truncated Gibbs measure $\mu_{k, N}$
under the dynamics of\;\eqref{gKdV2}.

\begin{lemma}\label{LEM:global}
Let  $N \in \N$ and $s< \frac 12$.
Then, 
the truncated gKdV~\eqref{gKdV2}
is globally well-posed in $H^s(\T)$.
Moreover, the truncated Gibbs measure $\Pkn$  
is invariant under the dynamics of\;\eqref{gKdV2}.

\end{lemma}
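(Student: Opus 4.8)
The plan is to prove Lemma~\ref{LEM:global} by exploiting the decoupled structure of the truncated gKdV~\eqref{gKdV2} that was already isolated in the excerpt, namely the split into the finite-dimensional ODE system~\eqref{gKdV3} for $v^N = \P_N u^N$ on $E_N$ and the explicit linear flow~\eqref{gKdV4} for the high frequencies $\P_N^\perp u^N$ on $E_N^\perp$. Global well-posedness reduces to treating these two pieces separately and then recombining them, since the full solution $u^N$ is simply the sum of a solution to~\eqref{gKdV3} and a solution to~\eqref{gKdV4}.

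\emph{First} I would handle the high-frequency part. Equation~\eqref{gKdV4} is the free Airy equation on $E_N^\perp$, whose solution is given explicitly in Fourier space by $\widehat{\P_N^\perp u^N}(n,t) = e^{in^3 t}\, \widehat{\P_N^\perp u_0}(n)$ for $|n| > N$; this is a linear isometry on every $H^s(\T)$ and exists globally for all $t \in \R$. \emph{Next} I would treat the finite-dimensional system~\eqref{gKdV3}. On $E_N$, which is a finite-dimensional real vector space, the right-hand side $\dx \P_N[(v^N)^k] - \dx^3 v^N$ is a polynomial, hence smooth, vector field; by the Picard--Lindel\"of theorem there is a unique local-in-time solution. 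To upgrade this to global existence I would invoke the two conserved quantities already recorded in the excerpt: the Hamiltonian $\mathcal{E}_N$ in~\eqref{Hamil2} controls $\int_\T (\dx v^N)^2\,dx$ (in the defocusing/odd case the potential term has a favorable sign, and in the non-defocusing case the mass cutoff $\ind_{\{\int u^2 dx \le R\}}$ bounds the low-order term), while conservation of the mass $M(v^N)$ controls $\int_\T (v^N)^2\,dx$. Together these bound the $H^1(E_N)$ norm of $v^N$ uniformly in time on the relevant region of phase space, which on the finite-dimensional space $E_N$ prevents finite-time blowup and yields a global flow. Since $v^N$ solves~\eqref{gKdV3}, one checks that $u^N = v^N + \P_N^\perp u^N$ indeed solves~\eqref{gKdV2}, giving global well-posedness in $H^s(\T)$ for every $s < \tfrac12$.

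For the invariance statement I would argue as in the classical finite-dimensional setting recalled around~\eqref{HR2}. The flow of the ODE system~\eqref{gKdV3} is a Hamiltonian flow on $E_N \cong \R^{2N}$, so by Liouville's theorem it preserves Lebesgue measure $\prod dp_j\,dq_j$; combined with conservation of the Hamiltonian $\mathcal{E}_N$ (and of the mass, so that the cutoff is respected), this shows the finite-dimensional Gibbs measure $\ft\mu_{k,N}$ is invariant under~\eqref{gKdV3}. Meanwhile the linear Airy flow~\eqref{gKdV4} preserves each Fourier modulus $|\widehat u(n)|$ and hence leaves the Gaussian marginal $\rho_N^\perp$ invariant. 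Because the two flows act on the independent factors $E_N$ and $E_N^\perp$, and the measure factorizes as $\Pkn = \ft\mu_{k,N}\otimes\rho_N^\perp$ by~\eqref{NGibbs1}, the product flow preserves the product measure $\Pkn$.

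\emph{The main obstacle} I anticipate is the verification that the finite-dimensional ODE flow genuinely preserves Lebesgue measure in the correct Darboux coordinates: one must identify the real and imaginary parts of the Fourier coefficients with canonically conjugate variables $(p_j, q_j)$ so that~\eqref{gKdV3} takes the canonical Hamiltonian form~\eqref{HR2}, after which Liouville's theorem applies verbatim. A secondary subtlety is ensuring the a~priori bounds from $\mathcal{E}_N$ and $M$ are genuinely coercive on the non-defocusing branch, where the sign-indefinite potential must be dominated using the mass cutoff; but as the excerpt notes, this is exactly the normalization mechanism of~\cite{LRS,BO94} and entails no new difficulty beyond bookkeeping.
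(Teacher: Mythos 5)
Your proposal is correct, and its overall architecture (the splitting into the finite-dimensional system \eqref{gKdV3} on $E_N$ and the linear Airy flow \eqref{gKdV4} on $E_N^\perp$, Picard--Lindel\"of for local existence, Liouville plus conservation of $\mathcal{E}_N$ for invariance of $\ft\mu_{k,N}$, invariance of $\rho_N^\perp$ under the linear flow, and the product structure \eqref{NGibbs1}) coincides with the paper's proof. The one genuine difference is the globalization step for \eqref{gKdV3}: the paper uses \emph{only} the conservation of the mass $\int_\T |v^N|^2\,dx$, observing that it equals the squared Euclidean norm of $\{\ft{v^N}(n)\}_{|n|\leq N}$ on the phase space $\C^{2N}$, which already rules out finite-time blowup of the ODE. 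Your additional appeal to the Hamiltonian \eqref{Hamil2} to control $\int_\T(\dx v^N)^2\,dx$ is redundant --- on the finite-dimensional space $E_N$ all norms are equivalent, so no $H^1$ bound is needed --- and, more importantly, your anticipated ``secondary subtlety'' about coercivity on the non-defocusing branch is not an obstacle at all: the mass bound controls the full phase-space norm irrespective of the sign of the potential term, so the cutoff $\ind_{\{\int u^2 dx\leq R\}}$ plays no role in global existence (it matters only for normalizing the measure, and the paper drops it from the dynamical argument for exactly this reason). As for your ``main obstacle,'' the paper glosses it by simply asserting that \eqref{gKdV3} is the finite-dimensional Hamiltonian dynamics associated with $\mathcal{E}_N$; your route via canonical coordinates works, and an even quicker alternative is to note that the vector field $\dx \nabla \mathcal{E}_N$ has a constant antisymmetric structure matrix (the symbol $in$ on the Fourier side), hence is divergence-free, so Lebesgue measure on $E_N$ is preserved without passing to Darboux form \eqref{HR2}.
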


In particular, Lemma\;\ref{LEM:global}
states that if the law of $u^N(0)$ is given by $\Pkn$, then
 the law of the corresponding solution $u^N(t)$ is again given by $\Pkn$
for any $t \in \R$.

\begin{proof}

We first prove global well-posedness of 
the truncated gKdV~\eqref{gKdV2}.
We use the decomposition of\;\eqref{gKdV2}
by the low frequency part\;\eqref{gKdV3} and 
the high frequency part\;\eqref{gKdV4}.
As a linear equation, 
the high frequency part\;\eqref{gKdV4}
is globally well-posed.
By viewing~\eqref{gKdV3} on the Fourier side,
we see that~\eqref{gKdV3} 
is a finite dimensional system of   ODEs of dimension\;$2N$.
Hence, by the Cauchy-Lipschitz theorem, 
\eqref{gKdV3} is locally well-posed.

By a direct computation, it is easy to see from\;\eqref{gKdV3}
that $\int_\T |v^N|^2 dx$ is conserved for\;\eqref{gKdV3}.
In particular, this shows that 
the Euclidean norm  on the phase space  $\C^{2N}$
\[\big\| \{\ft{v^N}(n)\}_{|n|\leq N}\big\|_{\C^{2N}}
= \bigg(\sum_{|n| \leq N} |\ft{v^N}(n)|^2\bigg)^\frac{1}{2}
= \bigg(\int_{\T} |v^N|^2 dx \bigg)^\frac{1}{2}\]

\noi
is conserved under~\eqref{gKdV3}.
This proves global existence for~\eqref{gKdV3}
and hence for the truncated gKdV~\eqref{gKdV2}.

On the one hand,  
the linear flow\;\eqref{gKdV4} leaves 
the Gaussian measure $\rho_N^\perp$ on $E_N^\perp$
 invariant under the dynamics.
On the other hand, 
noting that~\eqref{gKdV3} is the finite dimensional Hamiltonian
dynamics corresponding to $\mathcal{E}_N(v^N)$ defined in\;\eqref{Hamil2}, 
we see that 
$\ft \mu_{k, N}$
 is invariant under~\eqref{gKdV3}.
Therefore, in view of\;\eqref{NGibbs1}, 
the truncated Gibbs measure
$\Pkn $ is invariant under
the dynamics of~\eqref{gKdV2}.
\end{proof}

\section{Hermite functions and white noise functional}\label{SEC:3}

Let $u$ be 
the random function defined in\;\eqref{G5}  distributed according to the Wiener  measure $\rho$.
Then, 
the nonlinearity $\dx (u^k)$ makes sense as a (spatial) 
distribution almost surely, 
since 
$u \in H^{\frac 12 - \eps} (\T)$ for any $\eps> 0$ 
and hence
$u \in L^p (\T)$ for any $p < \infty$ almost surely.
Given $N \in \N$, define $F_N(u)$ and $F(u)$ by 
\begin{equation}
 F_N(u) :=  \P_N [( \P_N u)^k]\quad \text{and}\quad 
F(u)  := F_\infty(u) = u^k.
\label{nonlin1}
\end{equation}

\noi
The main goal of this section is
to establish the following convergence property of $F_N(u)$ to $F(u)$.

\begin{proposition}\label{PROP:main}
Let $k \geq 2$ be an integer
and  $ s < \frac 12$.
Then,  there exists $C_{k, s} > 0$ such that 
\begin{align}
  \|F_N(u)  \|_{L^p(\rho; H^s)}, \ \| F(u) \|_{L^p(\rho; H^s)}
\leq C_{k, s}  (p-1)^\frac{k}{2} 
\label{X1}
\end{align}

\noi
for any $p \geq 1$
and 
 any $N \in \N$.	
Moreover, 
given $\eps >0 $ with $s + \eps < \frac 12$, 
there exists $C_{k, s, \eps} > 0$ such that 
\begin{align}
\| F_M(u) - F_N(u) \|_{L^p(\rho; H^s)}
\leq C_{k, s, \eps} (p-1)^\frac{k}{2}\frac{1}{N^\eps}
\label{X2}
\end{align}

\noi
for 
 any $p \geq 1$ and 
any $ 1 \leq N \leq M \leq \infty$.
In particular, $F_N(u)$ converges to $F(u)$ in $L^p (\rho; H^s(\T))$
as $N \to \infty$.

\end{proposition}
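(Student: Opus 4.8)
The plan is to reduce all three bounds to second-moment ($p=2$) estimates by hypercontractivity, and then to evaluate those second moments using the white noise functional and the orthogonality of the Hermite polynomials (Lemma~\ref{LEM:W1}), thereby avoiding the pairing combinatorics that grows with $k$. For the first step, I note that for each fixed $n$ the Fourier coefficient $\widehat{F_N(u)}(n)$ is a polynomial of degree $k$ in the Gaussians $\{g_m\}$, hence lies in the sum of the first $k$ Wiener chaoses; the hypercontractivity (Wiener chaos) estimate then gives $\|\widehat{F_N(u)}(n)\|_{L^p(\rho)}\les (p-1)^{k/2}\|\widehat{F_N(u)}(n)\|_{L^2(\rho)}$ for $p\ge 2$. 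Writing $\|F_N(u)\|_{L^p(\rho;H^s)}^2=\big\|\sum_n \jb{n}^{2s}|\widehat{F_N(u)}(n)|^2\big\|_{L^{p/2}(\rho)}$ and applying Minkowski's inequality in $L^{p/2}(\rho)$ (valid as $p/2\ge1$), I would obtain $\|F_N(u)\|_{L^p(\rho;H^s)}\les (p-1)^{k/2}\|F_N(u)\|_{L^2(\rho;H^s)}$, and similarly for $F(u)$ and for $F_M(u)-F_N(u)$. This produces the factor $(p-1)^{k/2}$ in \eqref{X1}--\eqref{X2}, leaving only the $L^2(\rho;H^s)$ bounds.

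The crux is the $L^2$ computation. Expanding, $\widehat{F_N(u)}(n)=\ind_{|n|\le N}\sum_{m_1+\cdots+m_k=n,\ 0<|m_j|\le N}\frac{g_{m_1}\cdots g_{m_k}}{|m_1|\cdots|m_k|}$, and a naive evaluation of $\E|\widehat{F_N(u)}(n)|^2$ by Wick's theorem would require summing over all pairings of the $2k$ Gaussian factors, which is precisely the obstruction that worsens as $k$ grows. Instead, I would recast the monomial $g_{m_1}\cdots g_{m_k}$ through the white noise functional (Definition~\ref{DEF:W}) into its homogeneous (Hermite/Wick) components; since components of distinct degree lie in distinct Wiener chaoses they are orthogonal (Lemma~\ref{LEM:W1}), and the $L^2(\rho)$ norm of each homogeneous piece is given by a single diagonal sum with no pairing bookkeeping. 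This yields, for the top-degree piece, $\E|\widehat{F_N(u)}(n)|^2\les \sum_{m_1+\cdots+m_k=n,\ m_j\ne0}\frac{1}{|m_1|^2\cdots|m_k|^2}$ uniformly in $N$, the lower-degree (contracted) pieces being no larger since each contraction contributes only a convergent factor $\sum_{m\ne0}|m|^{-2}<\infty$. This is the step I expect to be the main obstacle, and the whole point of the white noise functional is to replace the intractable pairing sum by this clean expression in a manner uniform in $k$.

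For the summation I would use $\jb{n}\les_k\max_j\jb{m_j}$ on $\{m_1+\cdots+m_k=n\}$, so that $\jb{n}^{2s}\les_k\sum_j\jb{m_j}^{2s}$; summing over $n$ then decouples the frequencies and gives
\[
\|F_N(u)\|_{L^2(\rho;H^s)}^2\les_k\Big(\sum_{m\ne0}\frac{\jb{m}^{2s}}{|m|^2}\Big)\Big(\sum_{m\ne0}\frac{1}{|m|^2}\Big)^{k-1}.
\]
The last factor is finite, and the first is finite precisely when $2-2s>1$, i.e.\ $s<\tfrac12$, which establishes \eqref{X1} uniformly in $N$, and likewise the bound for $F(u)$ by taking the unrestricted sums.

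Finally, for the difference \eqref{X2} with $N\le M$, the coefficient $\widehat{F_M(u)}(n)-\widehat{F_N(u)}(n)$ vanishes unless some frequency exceeds $N$: either $|n|>N$, or every surviving monomial has an input with $|m_j|>N$, since the terms with all frequencies $\le N$ cancel. On that high frequency I would write $\jb{\cdot}^{-2\eps}\le N^{-2\eps}$ and absorb the leftover $\jb{\cdot}^{2\eps}$ into the summation above, which still converges because $s+\eps<\tfrac12$; this yields the gain $N^{-\eps}$. Taking $M=\infty$ then gives convergence of $F_N(u)$ to $F(u)$ in $L^p(\rho;H^s)$, completing the proof.
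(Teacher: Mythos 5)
Your proposal is correct and is essentially the paper's own argument: the paper likewise reduces to $p=2$ by Minkowski's integral inequality together with the Wiener chaos estimate (Lemma~\ref{LEM:hyp3}) applied to each Fourier coefficient, and evaluates the second moments via the white noise functional and Hermite orthogonality (Lemma~\ref{LEM:W1}), arriving at the same convolution sums $\sum_{n = n_1 + \cdots + n_{k-2m}} \prod_j n_j^{-2}$ with the contracted pieces controlled by powers of $\s_N \leq \pi^2/3$. The only cosmetic differences are in the bookkeeping: the paper first proves the coefficientwise bound $\|\jb{F_N(u), e_n}_{L^2_x}\|_{L^2(\rho)} \les |n|^{-1}$ (Lemma~\ref{LEM:W5}) using $\max_j |n_j| \ges |n|$ rather than your decoupling $\jb{n}^{2s} \les_k \sum_j \jb{n_j}^{2s}$ (which requires $s \geq 0$, the case $s<0$ following trivially from $s=0$), and for \eqref{X2} it expands the cross terms between the scales $M$ and $N$ and uses $|\s_M^m - \s_N^m| \les N^{-1}$ together with the tail bound \eqref{W8b} in place of your (equally valid) observation that the monomials with all frequencies at most $N$ cancel exactly, leaving a guaranteed high frequency on which to place the $\eps$-weight.
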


\begin{remark} \rm
In order to construct global-in-time  weak solutions claimed in Theorem \ref{THM:1}, 
one only needs to prove 
in \eqref{X1} and \eqref{X2}
with $s = 0$.
This easily follows from the fact that $u \in L^q(\T)$ for any $q < \infty$ almost surely.
Then, one can proceed as in \cite[Lemma 5.6]{BTT1}.
On the other hand, 
Proposition \ref{PROP:main}
is optimal in the range of $s < \frac 12 $
and shows the stability of $u \mapsto F_N(u)$ in the $H^s$-norm.

\end{remark}

\subsection{Hermite polynomials
and white noise functional}
First, recall from\;\cite{Kuo} the Hermite polynomials $H_n(x; \s)$
defined through the generating function:
\begin{equation}
G(t, x; \s) : =  e^{tx - \frac{1}{2}\s t^2} = \sum_{k = 0}^\infty \frac{t^k}{k!} H_k(x;\s).
\label{H1}
 \end{equation}
	
\noi
For simplicity, we set $G(t, x) : = G(t, x; 1)$
and $H_k(x) : = H_k(x; 1)$ in the following.
For readers' convenience, we write out the first few Hermite polynomials:
\begin{align*}
& H_0(x; \s) = 1, 
\qquad 
H_1(x; \s) = x, 
\qquad
H_2(x; \s) = x^2 - \s, \\
& H_3(x; \s) = x^3 - 3\s x, 
\qquad 
H_4(x; \s) = x^4 - 6\s x^2 +3\s^2.
\end{align*}
	
\noi
Then, the monomial $x^n$
can be expressed in term of the Hermite polynomials:
\begin{align}
x^k = \sum_{m = 0}^{[\frac{k}{2}]}
\begin{pmatrix}
k\\2m 
\end{pmatrix}
(2m-1)!! \, \s^m H_{k-2m}(x; \s),
\label{H2}
\end{align}

\noi
where $(2m-1)!! = (2m-1)(2m-3)\cdots 3\cdot 1
= \frac{(2m)!}{2^m m!}$
and $(-1)!!:= 1$ by convention.

Next, we define the white noise functional.
Let  $w(x;\o)$ be the (real-valued) mean-zero Gaussian white noise on $\T$
defined by
\[ w(x;\o) = \sum_{n\in \Z^*} g_n(\o) e^{inx}.\]

%
%\begin{definition}\label{DEF:W}\rm
%
%
% Given a real-valued function $f \in L^2(\T)$, we define the 
%{\it white noise functional} $W_f$
%by 
%\begin{equation}
% W_f (\o) = \jb{f, w(\o)}_{L^2} = \sum_{n \in \Z^*} \ft f(n) \cj{g_n}(\o).
%\label{W0}
% \end{equation}
%
%\end{definition}

\begin{definition}\label{DEF:W}\rm
The {\it white noise functional} $W_{(\cdot)}: L^2(\T) \to L^2(\O)$
is defined by 
\begin{equation}
 W_f (\o) = \jb{f, w(\o)}_{L^2_x} = \sum_{n \in \Z^*} \ft f(n) \cj{g_n}(\o)
\label{W0}
 \end{equation}

\noi
for a real-valued function $f \in L^2(\T)$.
Here, 
 $\{g_n\}_{n \in \N}$
is a sequence of independent standard
 complex-valued Gaussian
random variables 
conditioned that $g_{-n} = \cj {g_n}$, $n \in \N$, as in\;\eqref{G5}.

\end{definition}

For real-valued $f \in L^2(\T)$,  
$W_f$ is a real-valued Gaussian random variable
with mean 0 and variance $\|f\|_{L^2}^2$.
Moreover, we have 
\[ E\big[ W_f W_h ] = \jb{f, h}_{L^2_x}\]

\noi
for $f, h \in L^2(\T)$.
In particular, the white noise functional
$W_{(\cdot)}$ is an isometry from $L^2(\T)$ onto $L^2(\O)$.

The following orthogonality lemma on the white noise functional
and Hermite polynomials is well known\;\cite{DPT1}
and will play an essential role in the subsequent analysis.
We  present the proof for readers' convenience.

\begin{lemma}\label{LEM:W1}
Let $f, h \in L^2(\T)$ such that $\|f\|_{L^2} = \|h\|_{L^2} = 1$.
Then, for $k, m \in \Z_{\geq 0}$, we have 
\begin{align}
\E\big[ H_k(W_f)H_m(W_h)\big]
=  \dl_{km} k! [\jb{f, h}_{L^2_x}]^k.
\label{W1}
\end{align}

\noi
Here, $\dl_{km}$ denotes the Kronecker delta function.
\end{lemma}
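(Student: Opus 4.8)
The plan is to prove the orthogonality relation \eqref{W1} by exploiting the generating function \eqref{H1} for the Hermite polynomials, thereby packaging all values of $k$ and $m$ into a single computation. The key observation is that, by \eqref{H1}, we have the formal identity
\begin{align*}
\E\big[G(s, W_f)\, G(t, W_h)\big]
= \sum_{k, m = 0}^\infty \frac{s^k t^m}{k!\, m!}\, \E\big[H_k(W_f) H_m(W_h)\big],
\end{align*}
so that it suffices to compute the left-hand side as an explicit function of $s$ and $t$ and then match the Taylor coefficients in $s$ and $t$ against the claimed formula.

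First I would compute $\E\big[G(s, W_f) G(t, W_h)\big]$ directly. Since $G(s, x) = e^{sx - \frac{1}{2}s^2}$, the product is $\exp\big(s W_f + t W_h - \tfrac{1}{2}s^2 - \tfrac{1}{2}t^2\big)$. Now $s W_f + t W_h = W_{sf + th}$ by the linearity of the white noise functional in its subscript, and this is a mean-zero real Gaussian random variable with variance $\|sf + th\|_{L^2}^2 = s^2 + 2st\jb{f, h}_{L^2_x} + t^2$, using $\|f\|_{L^2} = \|h\|_{L^2} = 1$. Applying the standard formula $\E[e^{X}] = e^{\frac12 \mathrm{Var}(X)}$ for a mean-zero Gaussian $X$, the two pure squares $-\tfrac{1}{2}s^2 - \tfrac{1}{2}t^2$ cancel against the corresponding terms in the variance, leaving
\begin{align*}
\E\big[G(s, W_f) G(t, W_h)\big] = e^{st \jb{f, h}_{L^2_x}}
= \sum_{k = 0}^\infty \frac{(st)^k}{k!}\, [\jb{f, h}_{L^2_x}]^k.
\end{align*}

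Finally I would match coefficients. The right-hand side above contains only terms with equal powers of $s$ and $t$, which forces $\E[H_k(W_f) H_m(W_h)] = 0$ whenever $k \neq m$, yielding the factor $\dl_{km}$. Comparing the coefficient of $(st)^k$ on both sides gives $\frac{1}{(k!)^2}\E[H_k(W_f) H_k(W_h)] = \frac{1}{k!}[\jb{f, h}_{L^2_x}]^k$, hence $\E[H_k(W_f) H_k(W_h)] = k!\,[\jb{f,h}_{L^2_x}]^k$, which is exactly \eqref{W1}. The only point requiring genuine care, and what I expect to be the main (if modest) obstacle, is the justification for interchanging expectation and the infinite sums: one must verify that the generating-function series converge in $L^2(\O)$ (or are absolutely summable after taking expectations) so that term-by-term extraction of coefficients is legitimate. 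This is handled by noting that $G(s, W_f)$ is an $L^2(\O)$ function with Gaussian tails, so the double series converges absolutely for all real $s, t$, licensing the coefficient comparison above.
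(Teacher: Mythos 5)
Your proof is correct and takes essentially the same approach as the paper: both compute $\E\big[G(s, W_f)G(t, W_h)\big] = e^{st \jb{f, h}_{L^2_x}}$ by reducing to the Gaussian exponential moment of $W_{sf+th}$ and then match the coefficients of $s^k t^m$. Your closing remark justifying the term-by-term coefficient extraction supplies a convergence detail that the paper's proof leaves implicit.
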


\begin{proof}

First recall the following identity:
\begin{align*}
\int_\O e^{W_{f}(\o)} dP
& = \prod_{n \in \N} \frac{1}{\pi}\int_{\C} e^{2\Re (\ft f(n) \cj g_n)} e^{- |g_n|^2} dg_n \notag\\
& = \prod_{n \in \N}\frac{1}{\pi} \int_{\R} e^{2\Re \ft f(n) \Re g_n} e^{- (\Re g_n)^2} d\Re g_n
\int_{\R} e^{2\Im \ft f(n) \Im g_n} e^{- (\Im g_n)^2} d\Im g_n \notag\\
& =  e^{\sum_{n \in \N} |\ft f(n)|^2} = e^{\frac 12 \|f \|_{L^2}^2}.
\end{align*}

Let $G$ be as in\;\eqref{H1}.
Then, for any $t, s \in \R$ and $f, h \in L^2(\T)$ with $\|f \|_{L^2} = \|h\|_{L^2} = 1$, we have 
\begin{align}
\int_{\O} G(t, W_f(\o)) G(s, W_h(\o)) dP(\o)
& = e^{-\frac{t^2 + s^2}{2}} 
\int_\O e^{W_{tf + sh}(\o)} dP(\o)\notag \\
& = e^{-\frac{t^2 + s^2}{2}} e^{\frac{1}{2}\|tf + sh\|_{L^2}^2}
= e^{ts \jb{f, h}_{L^2_x}}.
\label{W1a}
\end{align}

\noi
Thus, it follows from\;\eqref{H1} and\;\eqref{W1a} that 
\begin{align*}
e^{ts \jb{f, h}_{L^2}}
 = \sum_{k, m  = 0}^\infty 
 \frac{t^ks^m}{k!m!} 
\int_\O H_k(W_f(\o))H_m(W_h(\o))dP(\o).
\end{align*}

\noi
By comparing the coefficients of $t^ks^m$, we obtain\;\eqref{W1}.
\end{proof}

Given $N \in \N \cup \{\infty\}$, 
define
\begin{align*}
\s_N := \E\big[\|\P_{N} u \|_{L^2}^2 \big]= \sum_{1\leq |n| \leq N} \frac{1}{n^2}
\end{align*}

\noi
with the understanding that $\P_\infty  = \text{Id}$.
For {\it fixed} $x \in \T$ and 
 $N \in \N \cup \{\infty\}$, 
 we also define
\begin{align}
\eta_N(x) (\cdot) & := \frac{1}{\s_N^\frac{1}{2}}
\sum_{1\leq |n| \leq N} \frac{\cj{e_n(x)}}{|n|}e_n(\cdot), 
\label{W3}\\
\g_N (\cdot) & := 
\sum_{1\leq |n| \leq N} \frac{1}{n^2}e_n(\cdot),\nonumber
\end{align}
	\noi
where $e_n(y) = e^{iny}$.
Note that 
\begin{align}
 \| \eta_N(x)\|_{L^2(\T)} = 1
\label{W3b}
\end{align}	

\noi
for all  fixed $x \in \T$ and all $N \in \N\cup\{\infty\}$.
Moreover, we have 
\begin{align}
\jb{\eta_M(x), \eta_N(y)}_{L^2_x}
= \frac{1}{\s_M^\frac{1}{2}\s_N^\frac{1}{2}} \g_N(y-x), 
\label{W4}
\end{align}

\noi
for fixed $x, y\in \T$
and $N, M \in \N \cup \{\infty\}$ with $M\geq N$. 
Note that $\s_N \leq \s_\infty = \frac{\pi^2}{3}$ for all $N \in \N$.

We now establish a second moment bound on the Fourier coefficients
of the (truncated) nonlinearity $F_N(u)$ and $F(u)$ defined in\;\eqref{nonlin1}.

\begin{lemma}\label{LEM:W5}
Let $k \geq 2$ be an integer.
Then, there exists $C_k > 0$ such that 
\begin{align}
\|  \jb{F_N(u), e_n}_{L^2_x}  \|_{L^2(\rho)}, \ \| \jb{F(u), e_n}_{L^2_x} \|_{L^2(\rho)}
\leq C_k \frac{1}{|n|}
\label{W5a}
\end{align}

\noi
for any $n \in \Z^*$ and any $N \in \N$.	
Moreover, given positive $\eps < \frac 12$, there exists $C_{k, \eps} > 0$ such that 
\begin{align}
\|\jb{ F_M(u) - F_N(u), e_n}_{L^2_x} \|_{L^2(\rho)}
\leq C_{k, \eps} \frac{1}{N^\eps |n|^{1-\eps}}
\label{W5b}
\end{align}

\noi
for any $n \in \Z^*$ and any $ 1 \leq N \leq M \leq \infty$.

\end{lemma}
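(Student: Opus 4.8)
The plan is to convert each random Fourier coefficient $\jb{F_N(u), e_n}_{L^2_x}$ into an average of Hermite polynomials of a single normalized white noise functional, so that the orthogonality relation of Lemma~\ref{LEM:W1} collapses the second moment to one deterministic convolution sum. The starting observation is that, for fixed $x \in \T$, the point evaluation $\P_N u(x)$ is a centered Gaussian which, by matching Fourier coefficients in \eqref{W0} and using $g_{-n} = \cj{g_n}$, equals
\begin{align*}
\P_N u(x) = \s_N^{\frac 12}\, W_{\eta_N(x)},
\end{align*}
where $\eta_N(x)$ is the unit vector of \eqref{W3}, so that by \eqref{W3b} the variable $W_{\eta_N(x)}$ has variance $1$. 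Since $\jb{F_N(u), e_n}_{L^2_x} = \jb{(\P_N u)^k, e_n}_{L^2_x}$ for $|n| \le N$ and vanishes for $|n| > N$, I would expand the monomial via \eqref{H2} with $\s = 1$,
\begin{align*}
(\P_N u(x))^k = \s_N^{\frac k2} \sum_{m = 0}^{[\frac{k}{2}]} c_m\, H_{k - 2m}\big(W_{\eta_N(x)}\big),
\qquad
c_m := \binom{k}{2m}(2m-1)!!,
\end{align*}
and integrate against $e^{-inx}$.

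Next I would expand $\E\big[\,|\jb{F_N(u), e_n}_{L^2_x}|^2\,\big]$ into a double space integral in $x,y$ and invoke Lemma~\ref{LEM:W1}. The crucial point is that $\E[H_{k-2m}(W_{\eta_N(x)}) H_{k-2m'}(W_{\eta_N(y)})] = \dl_{mm'}(k-2m)!\,\jb{\eta_N(x),\eta_N(y)}^{k-2m}$, so every off-diagonal ($m \ne m'$) term vanishes and no Wick-type combinatorics in $k$ ever appears. Using \eqref{W4}, namely $\jb{\eta_N(x),\eta_N(y)} = \s_N^{-1}\g_N(y-x)$, this reduces the second moment to
\begin{align*}
\E\big[\,|\jb{F_N(u), e_n}_{L^2_x}|^2\,\big]
= \sum_{m=0}^{[\frac{k}{2}]} c_m^2\, (k-2m)!\, \s_N^{2m}\, A^N_{k-2m}(n),
\end{align*}
up to a harmless constant, where $A^P_j(n) := \widehat{(\g_P)^j}(n) = \sum_{\substack{m_1 + \cdots + m_j = n \\ 1 \le |m_i| \le P}} \prod_{i} m_i^{-2}$ is a $j$-fold convolution; the $j=0$ term (present only for even $k$) drops since $n \ne 0$. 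The identical computation with $N = \infty$ handles $F(u)$. The remaining task is the deterministic bound $A^P_j(n) \lesssim_j |n|^{-2}$, uniform in $P$: by pigeonhole the maximal frequency on $m_1 + \cdots + m_j = n$ satisfies $|m_{i_0}| \ge |n|/j$; bounding $m_{i_0}^{-2} \le j^2 |n|^{-2}$, solving the constraint for $m_{i_0}$, and summing the remaining $j-1$ frequencies freely against $\sum_{m \in \Z^*} m^{-2} = \frac{\pi^2}{3}$ gives $A^P_j(n) \le j^3(\pi^2/3)^{j-1}|n|^{-2}$. Since $\s_N \le \s_\infty = \frac{\pi^2}{3}$, summing the finitely many $m$-terms yields \eqref{W5a}.

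For the difference estimate \eqref{W5b} with $N \le M$, I would first treat $|n| \le N$, where both coefficients contribute. Computing the cross term by the same route---now \eqref{W4} gives $\jb{\eta_M(x),\eta_N(y)} = \s_M^{-1/2}\s_N^{-1/2}\g_N(y-x)$, with the \emph{smaller} index in $\g_N$---the three second moments organize neatly as
\begin{align*}
\E\big[\,|\jb{F_M(u) - F_N(u), e_n}_{L^2_x}|^2\,\big]
= \sum_{m} c_m^2\, (k-2m)!\Big( \s_M^{2m}\big[A^M_{k-2m}(n) - A^N_{k-2m}(n)\big] + (\s_M^m - \s_N^m)^2 A^N_{k-2m}(n)\Big).
\end{align*}
The second term is controlled by $\s_M^m - \s_N^m \lesssim_m \s_M - \s_N = \sum_{N < |n'| \le M} (n')^{-2} \lesssim N^{-1}$ combined with $A^N_{k-2m}(n) \lesssim |n|^{-2}$. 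For $N < |n| \le M$ one has $\jb{F_N(u), e_n} = 0$, so \eqref{W5a} for $F_M$ together with $|n| > N$ gives $\frac{1}{|n|} = \frac{1}{|n|^\eps}\frac{1}{|n|^{1-\eps}} \le \frac{1}{N^\eps |n|^{1-\eps}}$; for $|n| > M$ both coefficients vanish. I expect the genuine obstacle to be the first term, $A^M_j(n) - A^N_j(n)$, which is the portion of the convolution where some frequency exceeds $N$. The key observation making it work is that if some $|m_i| > N$ then the \emph{maximal} frequency does too, so $|m_{i_0}| \ge \max(|n|/j, N)$; splitting the single factor $m_{i_0}^{-2} = |m_{i_0}|^{-(2-2\eps)}|m_{i_0}|^{-2\eps} \le (j/|n|)^{2-2\eps} N^{-2\eps}$ and summing the remaining $j-1$ frequencies freely (legitimate since $2-2\eps > 1$ precisely for $\eps < \frac{1}{2}$) yields $A^M_j(n) - A^N_j(n) \lesssim_{j,\eps} N^{-2\eps}|n|^{-(2-2\eps)}$, which dominates the first-term contribution and matches \eqref{W5b}. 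The subtlety is thus to harvest both the $N^{-2\eps}$ smallness and the $|n|^{-(2-2\eps)}$ decay from one factor, rather than losing one at the expense of the other.
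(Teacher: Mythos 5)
Your proposal is correct and follows essentially the same route as the paper's proof: the same identity $\P_N u(x) = \s_N^{1/2} W_{\eta_N(x)}$, the Hermite expansion \eqref{H2}, the orthogonality of Lemma~\ref{LEM:W1} collapsing the second moment to $\sum_m c_{k,m}\s_N^{2m}\F[\g_N^{k-2m}](n)$, the pigeonhole convolution bound (the paper's \eqref{W7b}), and for \eqref{W5b} the same four-term expansion regrouped as $\s_M^{2m}\big(\F[\g_M^{k-2m}]-\F[\g_N^{k-2m}]\big)(n)+(\s_M^m-\s_N^m)^2\F[\g_N^{k-2m}](n)$ together with the $N<|n|\le M$ tail, exactly matching \eqref{W8a}--\eqref{W8c}. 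One cosmetic remark: your parenthetical claim that summing the remaining $j-1$ frequencies requires $2-2\eps>1$ is spurious, since those frequencies each retain a summable factor $m_i^{-2}$ and your split of the maximal frequency works for any $0<\eps<1$; the restriction $\eps<\frac12$ is only needed downstream, in Proposition~\ref{PROP:main}, to guarantee $s+\eps<\frac12$ and hence the convergence of $\sum_n |n|^{2s-2+2\eps}$.
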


\begin{proof}
We first prove\;\eqref{W5a}.
Let $N \in \N \cup \{\infty\}$.
Given $x \in \T$, 
it follows from\;\eqref{G5},\;\eqref{W0}, and\;\eqref{W3}
that 
\begin{align}
\P_N u (x) = \s_N^\frac{1}{2}\frac{u_N(x)}{\s_N^\frac{1}{2}}
= \s_N^{\frac 12} \cj{W_{\eta_N(x)}}
= \s_N^{\frac 12} W_{\eta_N(x)}.
\label{W6}
\end{align}

\noi	
Then, from\;\eqref{H2} and\;\eqref{W6}, we have 
\begin{align}
[\P_N u (x)]^k = \s_N^\frac{k}{2} 
\sum_{m = 0}^{[\frac{k}{2}]}
\begin{pmatrix}
k\\2m 
\end{pmatrix}
(2m-1)!! \,  H_{k-2m}(W_{\eta_N(x)}).
\label{W7}
\end{align}

\noi
Clearly, $ \jb{F_N   (u),  e_n}_{L^2_x} = 0$
when $|n|> N$.
Thus, we only need to consider the case $|n|\leq N$.
From Lemma\;\ref{LEM:W1} with\;\eqref{W7},\;\eqref{W3b},  and\;\eqref{W4}, 
we have 
\begin{align}
\|  \jb{F_N  & (u),  e_n}_{L^2_x}  \|_{L^2(\rho)}^2
 = 
 \s_N^k
\int_{\T_x\times\T_y}
e_n(x) \cj{e_n(y)} 
\notag \\
& \hphantom{XX}
\times \sum_{m, \wt m = 0}^{[\frac{k}{2}]}
\begin{pmatrix}
k\\2m 
\end{pmatrix}
(2m-1)!! 
\begin{pmatrix}
k\\2\wt m 
\end{pmatrix}
(2\wt m-1)!!\notag \\
& \hphantom{XX}
 \times \int_\O 
 H_{k-2m}(W_{\eta_N(x)})
  H_{k-2\wt m}(W_{\eta_N(y)}) dPdx dy \notag \\
 & =  
 \sum_{m= 0}^{[\frac{k}{2}]}
\begin{pmatrix}
k\\2m 
\end{pmatrix}^2
\big[(2m-1)!! \big]^2
(k - 2m )! \, 
 \s_N^{2m} 
 \int_{\T_x\times\T_y}
 \big[\g_N(y-x)\big]^{k - 2m}
\cj{ e_n(y-x)}
 dx dy \notag \\
&  = 
 \sum_{m= 0}^{[\frac{k}{2}] }
c_{k, m}\s_N^{2m}
\F\big[\g_N^{k - 2m}\big] (n).
\label{W7a}
\end{align}

Given 
$n = n_1 + \cdots + n_{k - 2m}$, 
we have $\max_j |n_j| \ges |n|$
and thus 
\begin{align}
\F\big[\g_N^{k - 2m}\big] (n)
= \sum_{\substack{n = n_1 + \cdots + n_{k - 2m}\\
1\leq |n_j| \leq N}} 
\prod_{j = 1}^{k-2m} \frac{1}{n_j^2}
\leq d_{k, m}  \frac{1}{n^2}.
\label{W7b}
\end{align}

\noi
Hence,\;\eqref{W5a} follows from\;\eqref{W7a}
and\;\eqref{W7b}.

Next, we prove\;\eqref{W5b}.
Proceeding as before with\;\eqref{W7}, Lemma\;\ref{LEM:W1} for $1\leq N\leq M$, and\;\eqref{W4}, 
we have
\begin{align}
\|\jb{ & F_M(u)  - F_N(u), e_n}_{L^2_x} \|_{L^2(\rho)}^2
 = 
\int_{\T_x\times\T_y}
e_n(x) \cj{e_n(y)} 
\notag \\
& \hphantom{XX}
\times \sum_{m, \wt m = 0}^{[\frac{k}{2}]}
\begin{pmatrix}
k\\2m 
\end{pmatrix}
(2m-1)!! 
\begin{pmatrix}
k\\2\wt m 
\end{pmatrix}
(2\wt m-1)!!\notag \\
& \hphantom{XX}
 \times \int_\O 
\Big[
\ind_{[1, M]}(|n|)
\s_M^k
 H_{k-2m}(W_{\eta_M(x)})
  H_{k-2\wt m}(W_{\eta_M(y)}) \notag \\
& \hphantom{XXXXXX}
-\ind_{[1, N]}(|n|)
 \s_M^\frac{k}{2} \s_N^\frac{k}{2}
  H_{k-2m}(W_{\eta_M(x)})
  H_{k-2\wt m}(W_{\eta_N(y)}) \notag \\
& \hphantom{XXXXXX} 
-\ind_{[1, N]}(|n|)
 \s_M^\frac{k}{2} \s_N^\frac{k}{2}
 H_{k-2m}(W_{\eta_N(x)})
  H_{k-2\wt m}(W_{\eta_M(y)}) \notag \\
&  \hphantom{XXXXXX}
+ \ind_{[1, N]}(|n|)
\s_N^k
 H_{k-2m}(W_{\eta_N(x)})
  H_{k-2\wt m}(W_{\eta_N(y)}) \Big]
    dPdx dy \notag \\
&  = 
 \sum_{m= 0}^{[\frac{k}{2}] }
c_{k, m}
\ind_{[1, N]}(|n|)
\bigg\{
\s_M^{2m}
\F\big[\g_M^{k - 2m}\big] (n)
- 2 
\s_M^{m}\s_N^{m}
\F\big[\g_N^{k - 2m}\big] (n)
+ \s_N^{2m}
\F\big[\g_N^{k - 2m}\big] (n)\bigg\}\notag \\
& 
\hphantom{X}
+  \sum_{m= 0}^{[\frac{k}{2}] }
c_{k, m}
\ind_{(N, M]}(|n|)
\s_M^{2m}
\F\big[\g_M^{k - 2m}\big] (n).
\label{W8a}
\end{align}

On the one hand, noting that $|n| > N$, 
we can use\;\eqref{W7b}
to estimate
the second sum on the right-hand side of\;\eqref{W8a}, 
yielding\;\eqref{W5b}.
On the other hand, noting that 
\begin{align}
\Big|
\F\big[\g_M^{k - 2m}\big] (n)
&  - 
\F\big[\g_N^{k - 2m}\big] (n)\Big| 
 \leq \sum_{\substack{n = n_1 + \cdots + n_{k - 2m}\\
1\leq |n_j| \leq M\\
\max_j|n_j| \geq N}} 
\prod_{j = 1}^{k-2m} \frac{1}{n_j^2}
\leq d_{k, m}  \frac{1}{\max(N,  |n|)^2}
\label{W8b}
\end{align}

\noi
and 
\begin{align}
 |\s_M^m - \s_N^m| \leq C_m |\s_M - \s_N| \les \frac{1}{N},
\label{W8c}
 \end{align}

\noi
we can use 
\eqref{W7b},\;\eqref{W8b}, and\;\eqref{W8c}
to estimate the first sum on the right-hand side of\;\eqref{W8a},
yielding\;\eqref{W5b}.
\end{proof}

As an immediate corollary to Lemma\;\ref{LEM:W5}, 
we obtain the following estimate on the $H^s$-norm of $F_N(u)$, 
establishing Proposition\;\ref{PROP:main} for $p = 2$.

\begin{corollary}\label{COR:nonlin1}
Let $k \geq 2$ be an integer.
Let $ s < \frac 12$.
Then, there exists $C_{k, s} > 0$ such that 
\begin{align}
  \|F_N(u)  \|_{L^2(\rho; H^s)}, \ \| F(u) \|_{L^2(\rho; H^s)}
\leq C_{k, s} 
\label{W9a}
\end{align}

\noi
for any $N \in \N$.	
Moreover, given $\eps >0 $ with $s + \eps < \frac 12$, 
there exists $C_{k, s, \eps} > 0$ such that 
\begin{align}
\| F_M(u) - F_N(u) \|_{L^2(\rho; H^s)}
\leq  \frac{C_{k, s, \eps}}{N^\eps}
\label{W9b}
\end{align}

\noi
for any $ 1 \leq N \leq M \leq \infty$.

\end{corollary}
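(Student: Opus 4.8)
The plan is to deduce both bounds in Corollary~\ref{COR:nonlin1} directly from the frequency-by-frequency estimates of Lemma~\ref{LEM:W5} by summing them up in the $H^s$-norm. First I would unfold the definition of the $L^2(\rho; H^s)$-norm. Since the $H^s$-norm is itself an $\ell^2$-type norm in the frequency variable, $\|v\|_{H^s}^2 \sim \sum_{n \in \Z^*} \jb{n}^{2s} |\jb{v, e_n}_{L^2_x}|^2$, while the $L^2(\rho)$-norm is an $L^2$-norm in $\o$, the two commute. Concretely, by Tonelli's theorem (all summands are nonnegative) I would interchange the sum over $n$ with the integration in $\rho$ to write
\[
\|F_N(u)\|_{L^2(\rho; H^s)}^2
= \sum_{n \in \Z^*} \jb{n}^{2s} \, \big\| \jb{F_N(u), e_n}_{L^2_x} \big\|_{L^2(\rho)}^2,
\]
and likewise for $F(u)$ and for the difference $F_M(u) - F_N(u)$.

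For \eqref{W9a}, I would insert the pointwise bound \eqref{W5a}, obtaining
\[
\|F_N(u)\|_{L^2(\rho; H^s)}^2
\leq C_k^2 \sum_{n \in \Z^*} \frac{\jb{n}^{2s}}{|n|^2}.
\]
The series on the right behaves like $\sum |n|^{2s-2}$ for large $|n|$, hence converges precisely when $2s - 2 < -1$, i.e.~when $s < \frac 12$; this is exactly the hypothesis, and the sum is a finite constant depending only on $s$. Setting $C_{k,s}^2 := C_k^2 \sum_{n \in \Z^*} \jb{n}^{2s} |n|^{-2}$ yields \eqref{W9a}, and the bound for $\|F(u)\|_{L^2(\rho; H^s)}$ is identical, using the second estimate in \eqref{W5a}.

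For the convergence rate \eqref{W9b}, I would proceed in the same way but with the difference estimate \eqref{W5b}:
\[
\|F_M(u) - F_N(u)\|_{L^2(\rho; H^s)}^2
\leq \frac{C_{k,\eps}^2}{N^{2\eps}} \sum_{n \in \Z^*} \frac{\jb{n}^{2s}}{|n|^{2-2\eps}}.
\]
Here the series behaves like $\sum |n|^{2s + 2\eps - 2}$, which converges exactly under the assumption $s + \eps < \frac 12$; its sum furnishes a finite constant $C_{k,s,\eps}^2$. Taking square roots produces \eqref{W9b}, and letting $M \to \infty$ shows that $F_N(u) \to F(u)$ in $L^2(\rho; H^s)$.

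I do not expect a serious obstacle here: the only point requiring care is the summability of the frequency series, and the thresholds $s < \frac 12$ (resp.~$s + \eps < \frac 12$) are precisely what Lemma~\ref{LEM:W5} is designed to exploit. The $|n|^{-1}$ decay (resp.~$|n|^{-(1-\eps)}$) of each Fourier coefficient is exactly borderline for the $H^{\frac 12}$-norm, so that any power of regularity strictly below $\frac 12$ (and, in the second estimate, the extra $N^{-\eps}$ gain) renders the sum convergent. The interchange of summation and integration via Tonelli is the sole structural step, and it is justified by nonnegativity of the terms.
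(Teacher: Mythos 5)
Your proposal is correct and matches the paper's argument: the paper states the corollary as an immediate consequence of Lemma~\ref{LEM:W5}, obtained exactly as you do by expanding the $L^2(\rho;H^s)$-norm frequency by frequency, interchanging the sum over $n$ with the expectation, and inserting the bounds \eqref{W5a} and \eqref{W5b}, with convergence of $\sum_{n\in\Z^*}|n|^{2s-2}$ (resp.\ $\sum_{n\in\Z^*}|n|^{2s+2\eps-2}$) under $s<\frac12$ (resp.\ $s+\eps<\frac12$). This is the same computation the paper carries out explicitly in the proof of Proposition~\ref{PROP:main} for general $p$, there supplemented by the Wiener chaos estimate, which is unnecessary at $p=2$.
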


\subsection{Wiener chaos estimates}
In this subsection, we 
prove  Proposition\;\ref{PROP:main}
by extending\;\eqref{W9a} and\;\eqref{W9b}
in Corollary\;\ref{COR:nonlin1} to any finite $p \geq 2$.
This is achieved by an application of the Wiener chaos estimate (Lemma\;\ref{LEM:hyp3}).

Fix $d \in \N$.\footnote{Indeed, the discussion presented here also holds for $d = \infty$ 
in the context of abstract Wiener spaces.
For simplicity, however, we only consider  finite values for $d$.} Consider the Hilbert space $H = L^2(\R^d, \mu_d)$
endowed with  the Gaussian measure 
 $d\mu_d
= (2\pi)^{-\frac{d}{2}} \exp(-{|x|^2}/{2})dx$, $x = (x_1, \dots,
x_d)\in \R^d$. 
Let $L := \Dl -x \cdot \nabla$ be 
 the Ornstein-Uhlenbeck operator. 
 Then, we have the following  hypercontractivity of the Ornstein-Uhlenbeck
semigroup $S(t) := e^{tL}$ due to Nelson\;\cite{Nelson}.

\begin{lemma} \label{LEM:hyp1}
Let $p \geq 2$.
Then, 
for every $u \in L^p (\R^d, \mu_d)$
and  $t \geq
\frac{1}{2}\log(p-1)$, we have
\begin{equation}\label{hyp1}
\|S(t) u \|_{L^p(\R^d, \mu_d)}\leq \|u\|_{L^2(\R^d, \mu_d)}.
\end{equation}

\noi
\end{lemma}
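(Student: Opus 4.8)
The plan is to obtain \eqref{hyp1} as the $q=2$ endpoint of Nelson's hypercontractivity theorem, following Gross's derivation of hypercontractivity from the logarithmic Sobolev inequality for $\mu_d$. Since $S(t) = e^{tL}$ is represented by the Mehler kernel, it is positivity preserving and obeys $|S(t)u| \le S(t)|u|$; hence $\|S(t)u\|_{L^p} \le \|S(t)|u|\|_{L^p}$ while $\||u|\|_{L^2} = \|u\|_{L^2}$, so it suffices to treat $u \ge 0$. By approximating $u$ by $u + \delta$ with $\delta > 0$ (and truncating), then letting $\delta \downarrow 0$ at the end, we may assume that $u$ is smooth, bounded, and bounded below by a positive constant, which legitimizes the logarithms and the term-by-term differentiation used below.

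The heart of the argument is a monotonicity computation along the flow with a moving exponent. Put $\beta(t) := 1 + e^{2t}$, so that $\beta(0) = 2$ and $\beta'(t) = 2(\beta(t) - 1)$, and set
\[ F(t) := \|S(t)u\|_{L^{\beta(t)}(\R^d, \mu_d)}, \qquad g := S(t)u, \qquad h := g^{\beta/2}. \]
Differentiating $\log F = \beta^{-1}\log\int g^\beta\, d\mu_d$, using $\partial_t g = Lg$ and the Dirichlet-form identity $\int (L\phi)\,\psi\, d\mu_d = -\int \nabla\phi\cdot\nabla\psi\, d\mu_d$ (integration by parts against the Gaussian weight), together with $g^{\beta-2}|\nabla g|^2 = \tfrac{4}{\beta^2}|\nabla h|^2$ and $g^\beta \log g = \tfrac{1}{\beta} h^2 \log h^2$, one finds
\[ \frac{d}{dt}\log F(t) = \frac{1}{\beta^2 \|h\|_{L^2}^2}\left[ \beta' \int_{\R^d} h^2 \log\frac{h^2}{\|h\|_{L^2}^2}\, d\mu_d - 4(\beta - 1)\int_{\R^d} |\nabla h|^2\, d\mu_d \right]. \]

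The decisive ingredient, which I expect to be the main obstacle since it carries all of the dimension-free content, is the Gaussian logarithmic Sobolev inequality
\[ \int_{\R^d} h^2 \log\frac{h^2}{\|h\|_{L^2}^2}\, d\mu_d \le 2 \int_{\R^d} |\nabla h|^2\, d\mu_d, \]
valid for smooth $h$ (and in general by density). This is established by tensorizing the one-dimensional inequality, which in turn can be derived from the two-point inequality on $\{\pm 1\}^n$ via the central limit theorem, or proved directly in one variable. Since $\beta' = 2(\beta - 1) > 0$, inserting this bound into the previous display yields
\[ \frac{d}{dt}\log F(t) \le \frac{2\big(\beta' - 2(\beta - 1)\big)}{\beta^2 \|h\|_{L^2}^2} \int_{\R^d} |\nabla h|^2\, d\mu_d = 0, \]
so $F$ is nonincreasing and $F(t) \le F(0) = \|u\|_{L^2}$.

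It remains to pass from the exponent $\beta(t)$ to $p$. For $p \ge 2$ and $t \ge \tfrac12 \log(p-1)$ we have $e^{2t} \ge p-1$, hence $\beta(t) = 1 + e^{2t} \ge p$. Because $\mu_d$ is a probability measure, $\|\cdot\|_{L^p(\R^d,\mu_d)} \le \|\cdot\|_{L^{\beta(t)}(\R^d,\mu_d)}$, and therefore $\|S(t)u\|_{L^p} \le \|S(t)u\|_{L^{\beta(t)}} = F(t) \le \|u\|_{L^2}$, which is \eqref{hyp1}. Finally, removing the regularization by letting $\delta \downarrow 0$ recovers the bound for general $u \in L^p(\R^d, \mu_d)$ via Fatou's lemma and density.
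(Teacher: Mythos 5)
Your proposal is correct, but it cannot be compared line-by-line with an argument in the paper, because the paper gives no proof of this lemma at all: it is stated as a known result and attributed to Nelson \cite{Nelson}, with the only accompanying commentary being the (crucial for the application) remark that the bound \eqref{hyp1} is independent of the dimension $d$. What you supply is the standard Gross route: hypercontractivity from the Gaussian logarithmic Sobolev inequality via the moving-exponent monotonicity computation. I checked the computation and it is sound: with $\beta(t)=1+e^{2t}$ one has $\beta'=2(\beta-1)$, the Dirichlet-form identity for $L=\Dl-x\cdot\nabla$ in $L^2(\R^d,\mu_d)$ is correct, the substitutions $g^{\beta-2}|\nabla g|^2=\tfrac{4}{\beta^2}|\nabla h|^2$ and $g^\beta\log g=\tfrac1\beta h^2\log h^2$ are right, and the log-Sobolev constant $2$ for the standard Gaussian exactly cancels against $\beta'=2(\beta-1)$, which is precisely what produces the sharp threshold $t\ge\tfrac12\log(p-1)$ (note $\beta(t)\ge p$ then, and the $L^\beta\to L^p$ norm monotonicity uses that $\mu_d$ is a probability measure). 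The reduction to $u\ge\dl>0$ via positivity of the Mehler kernel and $S(t)1=1$ is also fine, and your derivation is automatically dimension-free because the log-Sobolev inequality tensorizes, matching the paper's emphasis. Two points you rely on without proof, both acceptable at this level but worth flagging: the Gaussian log-Sobolev inequality itself (classical, but it carries essentially all the content, so citing Gross or sketching the tensorization/two-point argument, as you do, is the honest move), and the justification of differentiating $t\mapsto\int g^{\beta(t)}d\mu_d$ under the integral sign, which your regularization ($u$ bounded, smooth, bounded below, so $g=S(t)u$ is smooth in $(t,x)$ and pinched between positive constants) does legitimize but which deserves one explicit sentence. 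As a historical aside, Nelson's original proof proceeded by different means (direct kernel/probabilistic estimates), so your argument is not a reconstruction of the cited proof but the now-standard semigroup derivation; for the purposes of this paper either yields the same dimension-independent statement, which is all that Lemmas \ref{LEM:hyp2} and \ref{LEM:hyp3} require.
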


\noi
We stress that 
the estimate\;\eqref{hyp1} is  independent of the dimension $d$.

Next, we define a {\it homogeneous Wiener chaos of order
$k$} to be an element of the form $\prod_{j = 1}^d H_{k_j}(x_j)$, 
where $k= k_1 + \cdots + k_d$
and $H_{k_j}$ is the Hermite polynomial of degree $k_j$ defined in\;\eqref{H1}. 
Then,  we have the following Ito-Wiener decomposition:
\[ L^2(\R^d, \mu_d) = \bigoplus_{k = 0}^\infty \mathcal{H}_k, \]

\noi
where $\mathcal{H}_k$
is  the closure of homogeneous Wiener chaoses of order $k$
under $L^2(\R^d, \mu_d)$.
We obtain the following corollary to Lemma\;\ref{LEM:hyp1}
for elements in $\mathcal{H}_k$.

\begin{lemma} \label{LEM:hyp2}
Let $F \in \mathcal{H}_k$.
Then, for $p \geq 2$, we have
\begin{equation} \label{hyp2}
\| F \|_{L^p(\R^d, \mu_d)}\leq (p-1)^\frac{k}{2}
\|F\|_{L^2(\R^d, \mu_d)}.
\end{equation}
\end{lemma}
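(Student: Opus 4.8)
The plan is to recognize $\mathcal{H}_k$ as the eigenspace of the Ornstein-Uhlenbeck operator $L$ associated with the eigenvalue $-k$, and then to combine this with the dimension-free hypercontractivity of Lemma~\ref{LEM:hyp1}.

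First I would record the eigenvalue relation for the Hermite polynomials. Differentiating the generating function \eqref{H1} in $x$ and in $t$ and comparing coefficients yields the standard identities $H_k'(x) = k H_{k-1}(x)$ and $x H_k(x) = H_{k+1}(x) + k H_{k-1}(x)$, from which one computes
\[
(\partial_x^2 - x \partial_x) H_k = k(k-1) H_{k-2} - k\big(H_k + (k-1) H_{k-2}\big) = - k H_k.
\]
Since $L = \Dl - x \cdot \nabla = \sum_{j=1}^d (\partial_{x_j}^2 - x_j \partial_{x_j})$ acts on each tensor factor separately, this gives
\[
L \prod_{j=1}^d H_{k_j}(x_j) = -\Big(\sum_{j=1}^d k_j\Big) \prod_{j=1}^d H_{k_j}(x_j),
\]
so every homogeneous Wiener chaos of order $k$, for which $k_1 + \cdots + k_d = k$, is an eigenfunction of $L$ with eigenvalue $-k$. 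Taking $L^2(\R^d, \mu_d)$-closures, I would conclude that $L F = -k F$, and hence $S(t) F = e^{tL} F = e^{-kt} F$, for every $F \in \mathcal{H}_k$.

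With this in hand the estimate is immediate. Taking $t = \frac{1}{2}\log(p-1)$, which is nonnegative since $p \geq 2$, I obtain $S(t) F = (p-1)^{-k/2} F$, whence $\|S(t) F\|_{L^p(\R^d, \mu_d)} = (p-1)^{-k/2} \|F\|_{L^p(\R^d, \mu_d)}$. Feeding this value of $t$ into Lemma~\ref{LEM:hyp1} gives
\[
(p-1)^{-\frac{k}{2}} \|F\|_{L^p(\R^d, \mu_d)} \leq \|F\|_{L^2(\R^d, \mu_d)},
\]
which rearranges at once to \eqref{hyp2}.

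The only substantive point is the identification of $\mathcal{H}_k$ with the $(-k)$-eigenspace of $L$; once the action of $S(t)$ on $\mathcal{H}_k$ is reduced to multiplication by $e^{-kt}$, the scaling in $t$ exactly converts the $L^2 \to L^p$ bound of Nelson's theorem into the claimed constant $(p-1)^{k/2}$. I do not expect a genuine obstacle here: the eigenvalue computation is a routine consequence of the generating function \eqref{H1}, and the diagonal action of $L$ on tensor products matches the grading used to define $\mathcal{H}_k$.
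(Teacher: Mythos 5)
Your proof is correct and follows essentially the same route as the paper: identify $\mathcal{H}_k$ as the $(-k)$-eigenspace of the Ornstein--Uhlenbeck operator, so that $S(t)F = e^{-kt}F$, and set $t = \frac{1}{2}\log(p-1)$ in Nelson's hypercontractive bound \eqref{hyp1}. The only difference is that the paper simply cites the eigenvalue fact as known, whereas you verify it from the generating function \eqref{H1} via the identities $H_k' = kH_{k-1}$ and $xH_k = H_{k+1} + kH_{k-1}$ --- a correct and welcome addition (with the routine remark that the relation extends from polynomial chaoses to their $L^2$-closure by boundedness of $S(t)$).
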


\noi 
It is known that 
any element in $\mathcal H_k$ 
is an eigenfunction of $L$ with eigenvalue $-k$.
Then, the estimate\;\eqref{hyp2} follows immediately
from noting that 
 $F$ is an eigenfunction
of $S(t) = e^{tL}$ with eigenvalue $e^{-tk }$
and setting  $t =\frac{1}{2} \log (p - 1)$ in\;\eqref{hyp1}.

As a further consequence to Lemma\;\ref{LEM:hyp2}, we obtain the following 
Wiener chaos estimate.

\begin{lemma}\label{LEM:hyp3}
Fix $k \in \mathbb{N}$ and $c(n_1, \dots, n_k) \in \C$.
Given 	
 $d \in \mathbb{N}$, 
 let $\{ g_n\}_{n = 1}^d$ be 
 a sequence of  independent standard complex-valued Gaussian random variables
 and set $g_{-n} = \cj{g_n}$.
Define\;$S_k(\o)$ by 
\begin{align*}
 S_k(\o) = \sum_{\G(k, d)} c(n_1, \dots, n_k) g_{n_1} (\o)\cdots g_{n_k}(\o),
 \end{align*}

\noi
where $\G(k, d)$ is defined by
\[ \G(k, d) = \big\{ (n_1, \dots, n_k) \in \{\pm1, \dots,\pm d\}^k \big\}.\]

\noi
Then, for $p \geq 2$, we have
\begin{equation}
 \|S_k \|_{L^p(\O)} \leq \sqrt{k+1}(p-1)^\frac{k}{2}\|S_k\|_{L^2(\O)}.
\label{hyp4}
 \end{equation}

\end{lemma}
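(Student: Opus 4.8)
The plan is to reduce the estimate \eqref{hyp4} to the homogeneous case covered by Lemma~\ref{LEM:hyp2}, since $S_k$ as written is a polynomial in the Gaussians $\{g_n\}$ that need not be a \emph{homogeneous} Wiener chaos of order $k$. The point is that although each monomial $g_{n_1}\cdots g_{n_k}$ is a product of $k$ Gaussian factors, the product is not in general an eigenfunction of the Ornstein--Uhlenbeck operator $L$: whenever two indices coincide (up to conjugation), the product $g_n \cj{g_n} = |g_n|^2$ contributes lower-order pieces once expanded in Hermite polynomials, so $S_k$ decomposes into homogeneous chaoses of orders $k, k-2, k-4, \dots$ rather than sitting purely in $\mathcal H_k$.

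First I would project $S_k$ onto the Ito--Wiener decomposition. Writing $g_n = \frac{1}{\sqrt 2}(x_n + i y_n)$ in terms of the real Gaussian coordinates on the underlying $\R^d$-type space, each monomial $g_{n_1}\cdots g_{n_k}$ is a polynomial of degree exactly $k$ in the real variables, hence lies in $\bigoplus_{j=0}^{k} \mathcal H_j$. Therefore I can write
\begin{align}
S_k = \sum_{j = 0}^{k} P_j S_k,
\label{plan:decomp}
\end{align}
where $P_j$ denotes the orthogonal projection onto $\mathcal H_j$, and crucially $P_j S_k = 0$ unless $j \equiv k \pmod 2$, so there are at most $\lfloor k/2\rfloor + 1 \le k+1$ nonzero terms. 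Each $P_j S_k \in \mathcal H_j$, so Lemma~\ref{LEM:hyp2} applies to give $\|P_j S_k\|_{L^p(\O)} \le (p-1)^{j/2}\|P_j S_k\|_{L^2(\O)} \le (p-1)^{k/2}\|P_j S_k\|_{L^2(\O)}$, using $j \le k$ and $p \ge 2$.

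With \eqref{plan:decomp} in hand, I would finish by the triangle inequality in $L^p(\O)$ followed by Cauchy--Schwarz over the (at most $k+1$) summands:
\begin{align}
\|S_k\|_{L^p(\O)}
\le \sum_{j=0}^{k} \|P_j S_k\|_{L^p(\O)}
\le (p-1)^{\frac k2} \sum_{j=0}^{k} \|P_j S_k\|_{L^2(\O)}
\le (p-1)^{\frac k2}\sqrt{k+1}\,\Big(\sum_{j=0}^{k}\|P_j S_k\|_{L^2(\O)}^2\Big)^{\frac12}.
\label{plan:final}
\end{align}
By orthogonality of the Wiener chaoses, $\sum_{j}\|P_j S_k\|_{L^2(\O)}^2 = \|S_k\|_{L^2(\O)}^2$, which is exactly \eqref{hyp4}. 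The only genuinely delicate point is the counting that yields the factor $\sqrt{k+1}$ rather than something growing faster: it rests on the observation that the decomposition \eqref{plan:decomp} has parity-restricted support, giving at most $k+1$ nonzero projections, together with the monotonicity $(p-1)^{j/2}\le(p-1)^{k/2}$ that lets me pull out a single power of $(p-1)^{k/2}$ uniformly in $j$. I expect the main obstacle to be verifying carefully that each monomial indeed lies in $\bigoplus_{j\le k}\mathcal H_j$ and that the bookkeeping with the complex-versus-real Gaussian coordinates (and the conjugation constraint $g_{-n}=\cj{g_n}$) does not disturb the orthogonality used in the last step; once that is clear, \eqref{plan:decomp}--\eqref{plan:final} close the argument.
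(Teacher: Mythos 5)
Your argument is correct and is essentially the paper's own proof: the paper deduces Lemma~\ref{LEM:hyp3} from the Hermite expansion \eqref{H2} and Lemma~\ref{LEM:hyp2} (citing Proposition 2.4 of \cite{TTz}), which is exactly your route of decomposing $S_k$ into homogeneous chaoses of orders $j\le k$ with $j\equiv k \pmod 2$, applying hypercontractivity to each projection, and recombining via the triangle inequality, Cauchy--Schwarz over the at most $k+1$ terms, and $L^2$-orthogonality of the chaoses. Your parity and bookkeeping checks (including that each monomial $g_{n_1}\cdots g_{n_k}$, written in real Gaussian coordinates, is a degree-$k$ polynomial lying in $\bigoplus_{j\le k}\mathcal{H}_j$) all go through as you expect.
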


Note that the estimate\;\eqref{hyp4} is independent of $d \in \N$.
Lemma\;\ref{LEM:hyp3} follows from\;\eqref{H2}
and Lemma\;\ref{LEM:hyp2}.
See Proposition 2.4 in\;\cite{TTz} for details.
Lemmas\;\ref{LEM:hyp2} and\;\ref{LEM:hyp3}
have been very effective
in the probabilistic study of dispersive PDEs
and related areas.
\cite{Tzv, TTz, Benyi, CO, NORS, R, BTT1}.

We are now ready to present the proof of Proposition\;\ref{PROP:main}.

\begin{proof}[Proof of Proposition\;\ref{PROP:main}]
We only prove\;\eqref{X1} for $N = \infty$.
The proofs of\;\eqref{X1} for $N \in \mathbb{N}$
and\;\eqref{X2} are analogous
in view of Lemma\;\ref{LEM:W5}.

Let $p \geq 2$.
By Minkowski's integral inequality with
\eqref{G5} and\;\eqref{nonlin1}
followed by Lemma\;\ref{LEM:hyp3}
and Lemma\;\ref{LEM:W5}, we have 
\begin{align*}
\|F(u)\|_{L^p(\rho; H^s)} 
& = \Bigg(\sum_{n \in \Z^*} |n|^{2s}
\bigg\|\sum_{\substack{n = n_1+\cdots n_k\\n_j \in \Z^*}}
\prod_{j = 1}^k \frac{g_{n_j}(\o)}{|n_j|}
\bigg\|_{L^p(\O)}^2\Bigg)^\frac{1}{2}\\
& \leq  
\sqrt{k+1}(p-1)^\frac{k}{2}
 \Bigg(\sum_{n \in \Z^*} |n|^{2s}
\bigg\|\sum_{\substack{n = n_1+\cdots n_k\\n_j \in \Z^*}}
\prod_{j = 1}^k \frac{g_{n_j}(\o)}{|n_j|}
\bigg\|_{L^2(\O)}^2\Bigg)^\frac{1}{2}\\
& = 
\sqrt{k+1}(p-1)^\frac{k}{2}
 \bigg(\sum_{n \in \Z^*} |n|^{2s}
\|\jb{F(u), e_n}_{L^2_x}\|_{L^2(\rho)}^2\bigg)^\frac{1}{2}\\
& \leq 
C_k(p-1)^\frac{k}{2}
 \bigg(\sum_{n \in \Z^*} |n|^{2s-2}\bigg)^\frac{1}{2}
\leq C_{k, s} (p-1)^\frac{k}{2}
\end{align*}
	
\noi
as long as  $s < \frac{1}{2}$.
This proves\;\eqref{X1} for $N = \infty$.
\end{proof}

\section{Proof of Theorem~\ref{THM:1}}\label{SEC:proof}

In this section, we present the proof of Theorem~\ref{THM:1}.
Fix an   integer $ k \geq 2$ 
and $s < \frac 12 $
in the following.
The basic structure of the argument follows that in\;\cite{BTT1, OT1}.
We point out a (minor) difference in the presentations in\;\cite{BTT1} and\;\cite{OT1}.
On the one hand, 
the argument 
in\;\cite{BTT1}
was first carried out on a finite time interval
$[-T, T]$ for $T>0$.
Namely, given $T>0$, 
we construct 
a set $\Si_T$ of full probability, guaranteeing the existence
of solutions on $[-T, T]$,
such that 
the law of the random function $u(t)$, $t \in [-T, T]$, is given by~$\Pk$.
Then, the desired set $\Si$ of full probability of global existence
was constructed
as $\Si = \bigcap_{N \in \N}  \Si_N$.
On the other hand, 
the desired set $\Si$ of full probability of global existence
was directly constructed in\;\cite{OT1} without restricting
the argument onto finite time intervals.
In the following, we follow the approach presented in\;\cite{OT1}.

Given $N \in \N$, 
let $\mu_{k, N}$ be the invariant truncated Gibbs measure 
for the truncated gKdV\;\eqref{gKdV2}
constructed in Section\;\ref{SEC:2}.
We first extend $\mu_{k, N}$ to a measure on space-time functions.
Let $\Phi_N: H^s (\T)\to C(\R; H^s(\T))  $
be the solution map to~\eqref{gKdV2} constructed in Lemma~\ref{LEM:global}.
By endowing $ C(\R; H^s(\T))$ with the compact-open topology,\footnote{Under the compact-open topology, a sequence $\{u_n\}_{n \in \N}\subset  C(\R; H^s(\T))$ converges
if and only if it converges uniformly on any compact time interval.}
 it follows from  the local Lipschitz continuity of $\Phi_N(\cdot)$
that $\Phi_N$ is continuous
from $H^s (\T)$ into $C(\R; H^s(\T)) $.
We now 
define a  probability measure $\nu_N$
on $C(\R; H^s(\T))  $ by setting
\begin{align}
 \nu_N = \Pkn \circ \Phi_N^{-1}.
\label{nu}
 \end{align}

\noi
Namely, we define $\nu_N$ as the induced probability measure of $\Pkn$
under the map $\Phi_N$.
In particular, we have
\begin{align*}
 \int_{C(\R; H^s)}  F(u) d\nu_N (u) = \int_{H^s} F(\Phi_N(\phi)) d \Pkn(\phi)
 \end{align*}

\noi
for any measurable function $F :C(\R; H^s(\T))\to  \R$.

Our first goal is to show that 
$\{\nu_N\}_{N\in \N}$ converges to some probability measure\;$\nu$
on\;$C(\R; H^s(\T))$.
For this purpose, 
recall the following definition of tightness for a sequence of probability measures.

\begin{definition}\label{DEF:tight}\rm
A sequence $\{ \rho_n\}_{n \in \N}$ of probability measures
on a metric space\;$\mathcal{S}$ is said to be {\it tight}
if, for every $\eps > 0$, there exists a compact set $K_\eps$
such that $\rho_n(K_\eps^c) \leq \eps$ for all\;${n \in \N}$.
\end{definition}

\noi	
Recall the following 
Prokhorov's theorem on 
a tight  sequence
of probability measures.
See~\cite{Bass}.

\begin{lemma}[Prokhorov's theorem]\label{LEM:Pro}
If a sequence of probability measures 
on a metric space\;$\mathcal{S}$ is tight, then
there is a subsequence that converges weakly to 
a probability measure on\;$\mathcal{S}$.
\end{lemma}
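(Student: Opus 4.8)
The plan is to prove the claimed (forward) implication of Prokhorov's theorem: uniform tightness yields a weakly convergent subsequence. The key point is that tightness confines essentially all of the mass of every $\rho_n$ to a single $\sigma$-compact, hence separable, subset of $\mathcal{S}$, which lets me reduce the problem to a compact metric space where weak-$*$ compactness is available. First I would use tightness to choose an increasing sequence of compact sets $K_1 \subseteq K_2 \subseteq \cdots$ with $\sup_n \rho_n(K_m^c) \le \tfrac1m$, set $K = \bigcup_m K_m$, and note that $K$, being $\sigma$-compact, is a separable metric space.

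Next, since a separable metric space embeds homeomorphically into the Hilbert cube $Q = [0,1]^{\N}$, which is compact and metrizable, I would regard each $\rho_n$ (which lives up to mass $\tfrac1m$ on $K_m \subseteq K$) as a Borel probability measure on $Q$ via this embedding. Because $C(Q)$ is separable, the Riesz representation theorem together with the Banach--Alaoglu theorem shows that the unit ball of $C(Q)^\ast = M(Q)$ is weak-$*$ sequentially compact; hence some subsequence $\rho_{n_i}$ converges in the weak-$*$ sense, i.e. $\int g \, d\rho_{n_i} \to \int g \, d\nu$ for all $g \in C(Q)$, to a finite measure $\nu$ on $Q$.

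I would then recover a limit measure on $\mathcal{S}$. Since $Q$ is compact, testing against the constant function $1$ shows $\nu(Q) = 1$, so $\nu$ is itself a probability measure; the only danger is that mass escapes into the compactification boundary, i.e. the part of $Q$ outside the embedded image of $\mathcal{S}$. The sets $K_m$ are compact, hence closed in $Q$, so the portmanteau inequality for weak limits on a metric space gives $\nu(K_m) \ge \limsup_i \rho_{n_i}(K_m) \ge 1 - \tfrac1m$; letting $m \to \infty$ yields $\nu(K) = 1$, so no mass escapes. Thus $\nu$ pulls back through the embedding to a Borel probability measure $\mu$ on $\mathcal{S}$ supported on $K$. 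Finally, transferring weak convergence back to $\mathcal{S}$ is routine: for bounded continuous $f$ on $\mathcal{S}$ one splits $\int f \, d\rho_{n_i}$ into its contributions over $K_m$ and $K_m^c$, bounds the tail uniformly by $\|f\|_\infty/m$, uses weak-$*$ convergence on the compact piece, and then sends $m \to \infty$, obtaining $\rho_{n_i} \rightharpoonup \mu$.

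The main obstacle is the mass-retention step: after passing to a limit in a compactification one must rule out any loss of total mass to the boundary, and this is exactly where \emph{uniform} tightness (as opposed to tightness of each individual $\rho_n$ separately) is indispensable. The remaining ingredients — separability of $C(Q)$, weak-$*$ sequential compactness of the unit ball of $M(Q)$, and the portmanteau estimates — are standard once the reduction to the compact space $Q$ is in place.
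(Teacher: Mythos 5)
Your proposal is correct, but note that the paper does not actually prove this lemma: it is invoked as a standard black box with a citation to Bass's textbook, so there is no in-paper argument to compare against. Your write-up is essentially the classical proof of the ``easy'' direction of Prokhorov's theorem, and it works at exactly the stated generality: since the paper assumes only that $\mathcal{S}$ is a metric space (not separable or complete), your reduction via the $\sigma$-compact set $K = \bigcup_m K_m$ is precisely the step that makes the Hilbert-cube embedding legitimate, and your observation that \emph{uniform} tightness gives $\rho_n(K^c) \le 1/m$ for every $m$, hence $\rho_n(K^c) = 0$ for all $n$, is the right way to see that no mass is lost in the reduction. The mass-retention step via the portmanteau inequality applied to the closed sets $\iota(K_m) \subseteq Q$ is also correctly ordered: you first get $\nu(Q) = 1$ by testing against the constant function, so that weak-$*$ convergence in $C(Q)^\ast$ upgrades to weak convergence of probability measures on $Q$, and only then invoke the portmanteau bound $\nu(\iota(K_m)) \ge \limsup_i \rho_{n_i}(K_m) \ge 1 - \tfrac1m$.

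Two small points deserve to be made explicit in a full write-up. First, in the final transfer step the function $f \circ \iota^{-1}$ is defined only on $\iota(K)$ and need not extend continuously to $Q$; one should apply the Tietze extension theorem to $f|_{K_m}$ (viewed on the compact set $\iota(K_m)$) to obtain $g_m \in C(Q)$ with $\|g_m\|_\infty \le \|f\|_\infty$, and then account for the discrepancy between $\int_{\iota(K_m)} g_m \, d\hat\rho_{n_i}$ and $\int_Q g_m \, d\hat\rho_{n_i}$, which is bounded by $\|f\|_\infty/m$ uniformly in $i$ (and similarly for $\nu$); since the indicator of $K_m$ is not continuous, this extension-plus-tail-bound device, or a Urysohn cutoff, is genuinely needed rather than a direct appeal to weak-$*$ convergence ``on the compact piece.'' Second, one should record that the weak-$*$ limit of positive normalized functionals is positive, so that the Riesz representation indeed yields a (nonnegative, countably additive) Borel measure $\nu$, and that $\iota(K)$ is Borel in $Q$ (being $\sigma$-compact), so that the pullback $\mu(B) := \nu(\iota(B \cap K))$ is a well-defined Borel probability measure on $\mathcal{S}$. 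With these details supplied, your argument is a complete and self-contained proof of the lemma as stated.
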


The following proposition shows that 
 the family 
 $\{ \nu_N\}_{N \in \N}$ is tight
 and 
 hence has a subsequence that 
 converges weakly to some probability measure $\nu$ on $C(\R; H^s(\T))$.

\begin{proposition}\label{PROP:tight}
%Let $s< \frac 12 $. 
The family $\{ \nu_N\}_{N \in \N}$
of the probability measures on 
$C(\R; H^s(\T))$
is tight.
\end{proposition}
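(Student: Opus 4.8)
The plan is to prove tightness by exhibiting, for each $\eps>0$, a compact subset $K_\eps\subset C(\R;H^s(\T))$ with $\nu_N(K_\eps^c)\leq\eps$ uniformly in $N$. The natural candidates for compact sets in $C(\R;H^s(\T))$ (with the compact-open topology) come from the Arzel\`a--Ascoli theorem: I need uniform bounds on $u$ in a \emph{higher} regularity space $H^{s+\delta}(\T)$ on each compact time interval, together with equicontinuity in time, say a H\"older bound $\|u(t_1)-u(t_2)\|_{H^s}\lesssim|t_1-t_2|^\theta$. The key structural fact I would exploit is that, since $\mu_{k,N}$ is invariant under $\Phi_N$ (Lemma~\ref{LEM:global}), the law of $u^N(t)=\Phi_N(t)\phi$ is \emph{time-independent} and equal to $\mu_{k,N}$; hence any spatial bound that holds in $L^p(\mu_{k,N})$ holds uniformly in $t$. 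Combined with the uniform integrability \eqref{inv8} of $R_N$ and Proposition~\ref{PROP:main}, this transfers estimates from $\mu_{k,N}$ back to the Wiener measure $\rho$ with constants independent of $N$.

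First I would record the spatial bound. Writing the Duhamel formulation of \eqref{gKdV2}, the linear propagator $e^{-t\dx^3}$ is unitary on every $H^\sigma(\T)$, so $\|u^N(t)\|_{H^{s}}$ is controlled by $\|\phi\|_{H^s}$ plus a time-integral of the nonlinearity $\dx F_N(u^N)$. The crucial point is that Proposition~\ref{PROP:main} gives $\|F_N(u)\|_{L^p(\rho;H^s)}\leq C_{k,s}(p-1)^{k/2}$ uniformly in $N$; by invariance and the Cauchy--Schwarz / H\"older argument using \eqref{inv8} (to pass from $\rho$ to $\mu_{k,N}$, paying a factor $\|R_N\|_{L^{p'}(\rho)}^{1/p'}\leq C_{p'}^{1/p'}$), the same bound holds in $L^p(\mu_{k,N};H^s)$ uniformly in $N$. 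Since $\dx$ costs one derivative, $\dx F_N(u^N)$ sits in $H^{s-1}$, and the Duhamel integral gains this derivative back: one obtains $\|u^N(t)-u^N(t')\|_{H^{s}}\lesssim|t-t'|\,\|u^N\|_{L^\infty_t H^{s}}^{?}$-type control, which after the time-averaging gives the time-equicontinuity. To get the extra spatial regularity needed for compactness, I would instead estimate $u^N$ in $H^{s+\delta}$ for small $\delta>0$ with $s+\delta<\tfrac12$: the linear data contributes $\|\phi\|_{H^{s+\delta}}$, which is \emph{not} bounded $\mu_{k,N}$-a.s., so the compact sets must be defined through the Duhamel remainder rather than the full solution.

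The cleanest route, and the one I would follow, is to define the compact sets via a Besov/H\"older-in-time norm of the \emph{nonlinear part} and use Chebyshev. Concretely, fix $T>0$ and a window, set $w^N(t)=u^N(t)-e^{-t\dx^3}\phi$, and bound $\mathbb{E}_{\mu_{k,N}}\big[\|w^N\|_{W^{\alpha,p}([-T,T];H^{s+\delta})}^p\big]$ using Proposition~\ref{PROP:main}, invariance, and \eqref{inv8}; the Sobolev embedding $W^{\alpha,p}([-T,T])\embeds C^{0,\theta}([-T,T])$ for $\alpha p>1$ then yields time-H\"older regularity, while the gain of a derivative from $\dx^{-1}$ in Duhamel (morally) places $w^N$ in $H^{s+\delta}$. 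Defining
\begin{align*}
K_\eps=\bigcap_{j\in\N}\Big\{u=e^{-t\dx^3}\phi+w:\ \|w\|_{C^{0,\theta}([-j,j];H^{s+\delta})}\leq \lambda_{j}\Big\},
\end{align*}
for suitably chosen $\lambda_j\to\infty$, makes $K_\eps$ precompact by Arzel\`a--Ascoli and a diagonal argument across the intervals $[-j,j]$, and Chebyshev with the uniform-in-$N$ moment bounds gives $\nu_N(K_\eps^c)\leq\eps$. The main obstacle I expect is honestly justifying the derivative gain: the nonlinearity carries a $\dx$, so one must verify that the Duhamel smoothing (unitarity of $e^{-t\dx^3}$ combined with the fact that $F_N(u^N)\in H^s$ by Proposition~\ref{PROP:main}, not merely $H^{s-1}$) actually lands $w^N$ in $H^{s+\delta}$ with uniform moments—this is exactly where the optimal regularity of Proposition~\ref{PROP:main} is used, and where care is needed to keep all constants independent of $N$ via the uniform bound \eqref{inv8}.
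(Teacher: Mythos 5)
The step that would fail is the claimed derivative gain in the Duhamel term. On $\T$ the Airy group $e^{-t\dx^3}$ is unitary on every $H^\sigma$ and there is no Kato-type local smoothing on the circle, so from $F_N(u^N)\in H^s$ (the optimal regularity of Proposition~\ref{PROP:main}) the Duhamel integral $\int_0^t e^{-(t-t')\dx^3}\dx F_N(u^N)(t')\,dt'$ is only controlled in $H^{s-1}$ pointwise in time; unitarity plus time-integration recovers nothing. Recovering the derivative would require multilinear smoothing estimates of $X^{s,b}$ type below $H^{1/2}$, which is precisely what is unavailable here (the solution map is mildly ill-posed for $s<\frac12$ when $k\geq 4$, cf.\ \cite{BO97no2, CKSTT2}) and what the paper's whole strategy is designed to circumvent. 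So the uniform-in-$N$ moment bound for $w^N$ in $C^{0,\theta}_T H^{s+\delta}$ cannot be obtained along your route. There are two further problems: as written, your $K_\eps$ constrains only $w$ and leaves $\phi$ unconstrained, so it is not precompact --- the family of linear evolutions $e^{-t\dx^3}\phi$ with $\phi$ merely bounded in $H^s$ is not compact in $C_T H^s$. And your stated reason for the linear/nonlinear splitting ($\|\phi\|_{H^{s+\delta}}$ ``not bounded $\mu_{k,N}$-a.s.'') is a red herring: for $s+\delta<\frac12$ this norm has moments of all orders uniformly in $N$ (Gaussian tails under $\rho$ together with \eqref{inv8}), and tightness only ever needs moments plus Chebyshev, never almost sure boundedness.

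The missing idea, and the paper's actual route, is that one never needs to climb \emph{above} the regularity of the data. Since the target exponent $s$ is strictly below $\frac12$, fix $s<s_1<s_2<\frac12$ and estimate the full solution: invariance of $\mu_{k,N}$ (Lemma~\ref{LEM:global}) yields the uniform-in-$N$ bounds $\big\|\|u\|_{L^p_T H^{s_2}}\big\|_{L^p(\nu_N)}\leq C_p T^{1/p}$, and the equation used \emph{crudely} gives $\dt u=-\dx^3 u+\dx F_N(u)\in H^{s_2-3}$ with the same uniform moments (Lemma~\ref{LEM:bound1}; for this only the $s=0$ case of Proposition~\ref{PROP:main} is needed). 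The deterministic interpolation Lemma~\ref{LEM:BTT1} then converts the pair of bounds in $L^p_T H^{s_2}$ and $W^{1,p}_T H^{s_2-3}$ into a uniform moment bound in $C^\al_T H^{s_1}$: losing three derivatives in the time-derivative slot is harmless, because a small amount of H\"older continuity in time at the slightly \emph{lower} spatial regularity $s_1$ is all that Arzel\`a--Ascoli requires, given the compact embedding $H^{s_1}(\T)\hookrightarrow H^s(\T)$. The compact sets are then $K_\eps=\big\{\|u\|_{C^\al_{T_j}H^{s_1}}\leq c_0\eps^{-1}T_j^{1+1/p}\text{ for all }j\big\}$ with $T_j=2^j$ and a diagonal argument, and Markov's inequality closes the proof. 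In short, your Chebyshev/Arzel\`a--Ascoli skeleton and the use of invariance to make spatial bounds time-uniform are the right instincts, but the regularity bookkeeping must go \emph{down} (trade derivatives for time equicontinuity via the equation) rather than \emph{up} (gain derivatives through Duhamel smoothing that does not exist on $\T$).
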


Similar tightness results were proven in\;\cite{BTT1, OT1}
in the context of 
the Gibbs measures for other evolution equations.
Before proceeding to the proof of Proposition\;\ref{PROP:tight}, 
we first state several lemmas.
We use the following notations.
Given $T>0$, we write $L^p_T H^s$ for $L^p([-T, T]; H^s(\T))$.
We use a similar abbreviation for other function spaces in time.
%Given  a probability measure $\rho$ on $H^s$, 
%we use $L^p(\rho)H^s$
%to denote\footnote{Namely, this is not to be confused
%with $L^p(H^s, \rho)$, where $H^s$, $s < \frac 12$, is a domain of $\rho$.}
%\[ \| \phi\|_{L^p(\rho)H^s}
%= \big\| \| \phi \|_{H^s}  \big\|_{L^p(\rho)}.\]

The first lemma provides a uniform control on the size
of random space-time functions under $\nu_N$.
It follows as a consequence of 
the invariance of $\Pkn$ under the dynamics of the truncated gKdV~\eqref{gKdV2}
(Lemma\;\ref{LEM:global}).
See\;\cite{BTT1, OT1} for the proof.

\begin{lemma}\label{LEM:bound1}
Let $ s< \frac 12$ and $p \geq 1$.
Then, there exists $C_p > 0$ such that 
\begin{align*}
\big\| \| u\|_{L^p_T H^s} \big\|_{L^p(\nu_N)} & \leq C_p T^\frac{1}{p}, \\
\big\| \| u\|_{W^{1, p}_T H^{s-3}} \big\|_{L^p(\nu_N)} & \leq C_pT^\frac{1}{p}, 
\end{align*}

\noi
uniformly in $N \in \N$.

\end{lemma}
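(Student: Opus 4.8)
The plan is to exploit the invariance of $\Pkn$ under the truncated flow $\Phi_N(t)$ established in Lemma~\ref{LEM:global}, combined with Fubini's theorem, so as to reduce each space-time estimate to a \emph{time-independent} moment bound on phase space, and then to transfer that bound from $\Pkn$ to the Wiener measure $\rho$ using the $N$-uniform integrability of $R_N$ from~\eqref{inv8}. Throughout I write $\Phi_N(t)\phi$ for the value at time $t$ of the solution emanating from $\phi$.

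First I would treat the $L^p_T H^s$ estimate. By the definition~\eqref{nu} of $\nu_N$ as the pushforward of $\Pkn$ under $\Phi_N$, and by Tonelli's theorem for the nonnegative integrand,
\begin{align*}
\big\| \| u\|_{L^p_T H^s} \big\|_{L^p(\nu_N)}^p
= \int_{H^s} \int_{-T}^T \| \Phi_N(t)\phi\|_{H^s}^p \, dt \, d\Pkn(\phi)
= \int_{-T}^T \int_{H^s} \| \Phi_N(t)\phi\|_{H^s}^p \, d\Pkn(\phi)\, dt.
\end{align*}
Since $\Pkn$ is invariant under $\Phi_N(t)$, the inner integral equals $\int_{H^s} \|\phi\|_{H^s}^p \, d\Pkn(\phi)$ for every $t$, so the time integral merely contributes the factor $2T$. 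It then remains to bound this moment uniformly in $N$. Writing $d\Pkn = Z_N^{-1} R_N \, d\rho$, Hölder's inequality gives $\int \|\phi\|_{H^s}^p R_N \, d\rho \le \big\| \|\phi\|_{H^s}^p\big\|_{L^2(\rho)} \|R_N\|_{L^2(\rho)}$; here $\|R_N\|_{L^2(\rho)}$ is uniformly bounded by~\eqref{inv8}, the prefactor $Z_N^{-1}$ is uniformly bounded since $Z_N \to Z > 0$ by the $L^p(\rho)$-convergence of $R_N$ noted before~\eqref{inv8}, and all $H^s$-moments of $\phi$ under the Gaussian measure $\rho$ are finite for $s < \frac12$. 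Taking $p$-th roots yields the bound $C_p T^{1/p}$.

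For the second estimate I would split $\|u\|_{W^{1, p}_T H^{s-3}}^p \lesssim \|u\|_{L^p_T H^{s-3}}^p + \|\dt u\|_{L^p_T H^{s-3}}^p$. The first summand is handled exactly as above in view of the continuous embedding $H^s \embeds H^{s-3}$. For the second, I would use the truncated equation~\eqref{gKdV2} to write $\dt u = -\dx^3 u + \dx F_N(u)$ along the flow, so that $\|\dt u\|_{H^{s-3}} \lesssim \|u\|_{H^s} + \|F_N(u)\|_{H^{s-2}} \lesssim \|u\|_{H^s} + \|F_N(u)\|_{H^s}$, since $\dx^3$ and $\dx$ cost three and one derivatives respectively and $s-2 < s$. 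Applying Fubini and the invariance of $\Pkn$ once more reduces the matter to bounding $\int_{H^s}\|F_N(\phi)\|_{H^s}^p\, d\Pkn(\phi)$ uniformly in $N$; after passing to $\rho$ and applying Hölder, this is controlled by $\|F_N\|_{L^{2p}(\rho;H^s)}^p$, which is uniformly bounded by Proposition~\ref{PROP:main}.

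The point requiring the most care is the interplay of invariance with the nonlinear term: invariance of $\Pkn$ under $\Phi_N(t)$ asserts that the law of $\Phi_N(t)\phi$ under $\Pkn$ is again $\Pkn$, whence $\|F_N(\Phi_N(t)\phi)\|_{H^s}$ has the same $\Pkn$-distribution as $\|F_N(\phi)\|_{H^s}$; confirming this identification and then transferring the resulting $\Pkn$-moment to a $\rho$-moment uniformly in $N$ is precisely where~\eqref{inv8} and Proposition~\ref{PROP:main} combine. Everything else is a routine application of Fubini and Hölder, and the uniformity in $N$ is inherited from the $N$-uniform estimates already established.
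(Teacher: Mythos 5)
Your proof is correct and takes essentially the approach the paper intends: the paper does not write out a proof of Lemma~\ref{LEM:bound1} but states that it is a consequence of the invariance of $\mu_{k,N}$ (Lemma~\ref{LEM:global}) and refers to \cite{BTT1, OT1}, where exactly your argument is carried out --- Tonelli/Fubini plus invariance at each fixed time to reduce to a time-independent moment, Cauchy--Schwarz against $\|R_N\|_{L^2(\rho)}$ using \eqref{inv8} together with $Z_N \gtrsim 1$, and, for the $\dt u$ component, the equation~\eqref{gKdV2} combined with the $N$-uniform bound on $F_N$ from Proposition~\ref{PROP:main}. In particular, your transfer of moments from $\mu_{k,N}$ to the Wiener measure $\rho$ uniformly in $N$ is precisely the intended mechanism, and I see no gaps.
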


Recall also the following lemma 
on deterministic functions from~\cite{BTT1}.	

\begin{lemma}[{\cite[Lemma 3.3]{BTT1}}]\label{LEM:BTT1}
Let $T > 0$ and $1\leq p \leq \infty$.
Suppose that 
$u \in L^p_T H^{s_1}$ and $\dt u \in L^p_T H^{s_2}$
for some $s_2 \leq s_1$.
Then, for $ \dl > p^{-1}(s_1 - s_2)$, we have 
\[ \| u \|_{L^\infty_TH^{s_1 - \dl}} \les \| u \|_{L^p_T H^{s_1}}^{1-\frac 1p}
\| u \|_{W^{1, p}_T H^{s_2}}^{ \frac 1p}.\]

\noi
Moreover, there exist $\al > 0$ and $\theta \in [0, 1]$
such that for all $t_1, t_2 \in [-T, T]$, we have
\[ \| u(t_2) - u(t_1)  \|_{H^{s_1 - 2\dl}} \les |t_2 - t_1|^\al  \| u \|_{L^p_T H^{s_1}}^{1-\theta}
\| u \|_{W^{1, p}_T H^{s_2}}^{ \theta}.\]

\end{lemma}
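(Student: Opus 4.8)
The plan is to read Lemma~\ref{LEM:BTT1} as a trace/interpolation embedding: a space-time function that is $L^p$ in time with values in $H^{s_1}$ and whose time derivative is $L^p$ in time with values in the weaker space $H^{s_2}$ admits a continuous-in-time representative, at the cost of a regularity loss, and the strict gap $\dl > p^{-1}(s_1 - s_2)$ is exactly the room needed to pass from the sharp (Besov) trace space to the Sobolev scale. I would split the argument into two pieces: first the pointwise-in-time bound (the first displayed inequality), and then the time-modulus of continuity (the second), deducing the latter from the former.

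For the first inequality I would localize in frequency by Littlewood--Paley projections $P_j$ (frequencies $\sim 2^j$) and treat one block $v_j := P_j u$ at a time. Viewing $v_j$ as an $L^2_x$-valued function of time, the one-dimensional scale-invariant interpolation inequality
\[ \|v_j\|_{L^\infty_T L^2} \les \|v_j\|_{L^p_T L^2}^{1-\frac 1p}\|\dt v_j\|_{L^p_T L^2}^{\frac 1p} + T^{-\frac 1p}\|v_j\|_{L^p_T L^2}, \]
a consequence of the fundamental theorem of calculus and Hölder, converts time-$L^p$ control with a derivative into time-$L^\infty$ control. Multiplying by $2^{jr}$ with $r := s_1 - p^{-1}(s_1 - s_2)$, summing the $p$-th powers in $j$, interchanging $\sum_j$ and $\int_{-T}^T$ by Fubini to match the time integrability, and applying Hölder in $j$ with exponents $\tfrac{p}{p-1}$ and $p$, one obtains a bound of the form
\[ \|u\|_{L^\infty_T B^{r}_{2,p}} \les \|u\|_{L^p_T B^{s_1}_{2,p}}^{1 - \frac 1p}\,\|\dt u\|_{L^p_T B^{s_2}_{2,p}}^{\frac 1p}, \]
where $B^{\s}_{2,p}$ denotes the relevant Besov space. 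Controlling the right-hand side by the Sobolev norms (for $p\ge 2$ via $H^{\s}\embeds B^{\s}_{2,p}$) and embedding $B^r_{2,p}\embeds H^{s_1-\dl}$, which is possible precisely because $s_1-\dl < r$, i.e. $\dl > p^{-1}(s_1-s_2)$, then yields the first inequality.

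The main obstacle is exactly this matching of integrabilities: the $L^p$-in-time norm pairs naturally with an $\ell^p$-in-frequency (Besov) summation rather than the $\ell^2$ summation of the Sobolev space $H^{s_1}$, so the bare Littlewood--Paley sum lands in a Besov space, and it is the strict loss $\dl > p^{-1}(s_1-s_2)$ that is spent to return to the Sobolev scale. For $1\le p<2$ the inclusions between $\ell^2$ and $\ell^p$ reverse and one instead invokes the Lions--Magenes trace theorem for the real interpolation space $(H^{s_1},H^{s_2})_{1/p,\,p}$, which produces the same conclusion.

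For the second inequality I would begin with the increment in the \emph{weak} norm: since $u(t_2)-u(t_1)=\int_{t_1}^{t_2}\dt u(\s)\,d\s$, Hölder in time gives $\|u(t_2)-u(t_1)\|_{H^{s_2}} \les |t_2-t_1|^{1-\frac 1p}\|\dt u\|_{L^p_T H^{s_2}}$, a genuine modulus of continuity but only in the weaker space $H^{s_2}$. On the other hand the first inequality furnishes a uniform-in-time bound on $\|u(t)\|_{H^{s_1-\dl}}$, hence an $O(1)$ control of $\|u(t_2)-u(t_1)\|_{H^{s_1-\dl}}$. Interpolating the small factor in $H^{s_2}$ against the bounded factor in $H^{s_1-\dl}$, with the weight $\ta$ fixed by $s_1-2\dl = \ta\,s_2 + (1-\ta)(s_1-\dl)$, lands the increment in $H^{s_1-2\dl}$ and produces the Hölder exponent $\al = \ta\,(1-\frac 1p) > 0$. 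Substituting the bound from the first inequality and collecting the powers (which sum to one by homogeneity) then gives an estimate of the asserted form, for a suitable $\ta\in[0,1]$. The only genuine constraint is $s_1-2\dl \ge s_2$, ensuring $\ta\in[0,1]$, which holds in the regime $\dl \le \tfrac 12(s_1-s_2)$ relevant to the application.
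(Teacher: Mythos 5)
The paper itself does not prove this lemma --- it is imported verbatim from \cite[Lemma 3.3]{BTT1} --- so the comparison is with the standard trace/interpolation argument behind that citation, which your proof essentially reconstructs: frequency localization, the one-dimensional inequality $\|f\|_{L^\infty_T}\les \|f\|_{L^p_T}^{1-\frac1p}\|f'\|_{L^p_T}^{\frac1p}+T^{-\frac1p}\|f\|_{L^p_T}$ applied blockwise, H\"older in the dyadic index after Fubini, and the strict gap $\dl>p^{-1}(s_1-s_2)$ spent on the embedding $B^{r}_{2,p}\embeds H^{s_1-\dl}$ with $r=s_1-p^{-1}(s_1-s_2)$; the deduction of the time-H\"older bound by interpolating the fundamental-theorem increment estimate in $H^{s_2}$ against the uniform $H^{s_1-\dl}$ bound is likewise the expected route, and your bookkeeping of exponents is correct ($\al=\ta(1-\frac1p)$, the powers of the two norms summing to one).

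Two loose ends remain, both easily closed. First, the additive term $T^{-\frac1p}\|v_j\|_{L^p_TL^2}$ that you correctly include in the blockwise inequality cannot simply be dropped from the final display: testing with $u(t,x)=e^{ix}$, constant in time, shows the pure product bound fails with a $T$-uniform constant as $T\to 0$, so the implicit constant must be allowed to depend on $T$ (or the additive term $T^{-\frac1p}\|u\|_{L^p_TH^{s_1}}$ kept); this is harmless here, since the paper applies the lemma on intervals $[-T_j,T_j]$ with $T_j=2^j\geq 2$, but it should be stated. Second, your second inequality genuinely requires $s_1-2\dl\geq s_2$ for $\ta\in[0,1]$; rather than deferring the complementary regime $\dl>\frac12(s_1-s_2)$ to ``the application,'' note that it is immediate, since then $H^{s_2}\embeds H^{s_1-2\dl}$ and the fundamental-theorem bound alone gives the claim with $\ta=1$ and $\al=1-\frac1p$. (This exponent also shows the second assertion is vacuous at $p=1$ --- a constant-in-space profile switched on over a time interval of length $\eps$ defeats any $\al>0$ --- so $p>1$ is implicitly needed there; the paper only invokes the lemma with $p\gg1$, which also means the $p\geq 2$ case you proved in detail is the relevant one, and your gesture toward the Lions--Magenes trace theorem for $1\leq p<2$ need not be expanded.)
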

	
We  now present the proof of Proposition~\ref{PROP:tight}.

\begin{proof}[Proof of Proposition~\ref{PROP:tight}]
Let $s < s_1 < s_2 < \frac 12 $.
For $\al \in (0, 1)$ and $T>0$, 
we define  the Lipschitz space $C^\al_TH^{s_1} = C^\al([-T, T]; H^{s_1}(\T))$  by the norm
\[ \| u \|_{C^\al_T H^{s_1}} = \sup_{\substack{t_1, t_2 \in [-T, T]\\t_1 \ne t_2}}
\frac{\| u(t_1) - u(t_2) \|_{H^{s_1}}}{|t_1 - t_2|^\al} + \|u \|_{L^\infty_T H^{s_1}}.
\]

\noi
Note 
that the embedding 
$C^\al_T H^{s_1} \subset 
C_T H^{s}$ is compact
for each $T>0$.
This follows from the compact embedding of $H^{s_1}(\T)$ into $H^s(\T)$
and the H\"older regularity in time
of functions in $C^\al_T H^{s_1}$, 
allowing us to apply Arzel\`a-Ascoli's  theorem.

For $j \in \N$, let $T_j = 2^j$.
Given $\eps > 0$, define $K_\eps$ by 
\[ K_\eps = \big\{ u \in C(\R; H^s):\, \| u \|_{C^\al_{T_j} H^{s_1}} \leq c_0 \eps^{-1} T_j^{1+ \frac{1}{p}} 
\text{ for all }
j \in \N \big\}\]

\noi
for some $p \geq 1$ (to be chosen later).
Let $\{u_n \}_{n \in \N} \subset K_\eps$.
By the definition of $K_\eps$, 
$\{u_n \}_{n \in \N}$ is bounded in 
$C^\al_{T_j} H^{s_1}$ for each $j \in \N$.
Then, in view of the compact embedding 
$C^\al_T H^{s_1} \subset  C_T H^{s}$, 
we can apply the diagonal argument 
to extract a subsequence $\{u_{n_\l} \}_{\l \in \N}$
convergent in $C^\al_{T_j} H^s$ for each $j \in \N$.
In particular,  $\{u_{n_\l} \}_{\l \in \N}$ converges uniformly in $H^s$
on any compact time interval.
Hence,  $\{u_{n_\l} \}_{\l \in \N}$ converges in $C(\R; H^s)$
endowed with the compact-open topology.
This proves that $K_\eps$ is compact in $C(\R; H^s)$.

By Lemma~\ref{LEM:BTT1} with large $p\gg1$ and Young's inequality
followed by Lemma~\ref{LEM:bound1}, 
 we have
\begin{align}
\Big\|\| u \|_{C^\al_T H^{s_1}}\Big\|_{L^p(\nu_N)}
& \les \Big\| \|u \|_{L^p_TH^{s_2}}^{1-\theta} \|u \|_{W^{1, p}_TH^{s_2 - 3}}^{\theta}\Big\|_{L^p(\nu_N)}\notag\\
& \les \Big\|\|u \|_{L^p_TH^{s_2}}\Big\|_{L^p(\nu_N)}
+ \Big\|\|u \|_{W^{1, p}_TH^{s_2 - 3}}\Big\|_{L^p(\nu_N)}
\leq C_p T^\frac{1}{p}.
\label{Y6}
\end{align}
	
\noi
for some $\al \in (0, 1)$ and $\theta \in [0, 1]$, 
uniformly in $N \in \mathbb{N}$.
Then, by Markov's inequality with~\eqref{Y6}
and choosing $c_0 > 0$ sufficiently large, we have 
\[ \nu_N(K_\eps^c) 
\leq c^{-1}_0 \eps T_j^{-1 - \frac{1}{p}}
\Big\|\| u \|_{C^\al_{T_j} H^{s_1}}\Big\|_{L^1(\nu_N)}
\leq c_0^{-1} C_p \eps 
 \sum_{j = 1}^\infty T_j^{-1}
= c^{-1}_0 C_p \eps  < \eps.\]

\noi
This completes the proof of Proposition\;\ref{PROP:tight}.
\end{proof}

As a consequence of  Proposition~\ref{PROP:tight} and Lemma~\ref{LEM:Pro}, 
we conclude that,  
passing to a subsequence,  
$\nu_{N_j}$ converges weakly to some probability measure $\nu $
on $C(\R; H^s(\T))$.

Next, recall 
the following Skorokhod's theorem.
%by introducing a new probability space $(\wt \O, \F, \wt P)$
%and a sequence of new random variables $\wt {u^N}$ with the same distribution $\nu_N$, 
%we can  upgrade
%this weak convergence to almost sure convergence of $\wt {u^N}$.
See~\cite{Bass, Dudley} for the proof.

\begin{lemma}[Skorokhod's theorem]\label{LEM:Sk}
Let $\mathcal{S}$ be a %complete 
separable metric space. 
Suppose that $\rho_n$ are probability measures on $\mathcal{S}$
converging weakly to a probability measure $\rho$.
Then, there exist random variables $X_n:\wt \O \to \mathcal{S}$
with laws $\rho_n$
and a random variable  $X:\wt \O \to \mathcal{S}$ with law $\rho$
such that $X_n \to X$ almost surely.

\end{lemma}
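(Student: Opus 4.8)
The plan is to realize all the random elements on the single standard probability space $\wt\O = \big([0,1), \mathcal{B}([0,1)), \mathrm{Leb}\big)$, building $X_n$ and $X$ from one uniform variable $U(\o)=\o$ so that closeness of the laws forces closeness of the sample points. To fix ideas, recall the scalar case $\mathcal{S} = \R$: writing $F_n, F$ for the distribution functions of $\rho_n, \rho$, weak convergence means $F_n \to F$ at every continuity point of $F$; setting $X_n = F_n^{-1}(U)$ and $X = F^{-1}(U)$ with the quantile inverse $F^{-1}(t) = \inf\{x : F(x) \ge t\}$ produces the laws $\rho_n$ and $\rho$, and a direct monotonicity argument shows $X_n(U) \to X(U)$ at every $U$ that is a continuity point of $F^{-1}$, i.e.\ for all but countably many $U \in [0,1)$, hence $\mathrm{Leb}$-a.s. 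The entire content of the general theorem is to replace this quantile bookkeeping, unavailable on an arbitrary $\mathcal{S}$, by a partition-based coupling.

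For general separable $\mathcal{S}$, first I would construct, for each $m \in \N$, a countable Borel partition $\{B^m_i\}_{i \ge 1}$ of $\mathcal{S}$ whose cells are $\rho$-continuity sets, $\rho(\partial B^m_i) = 0$, and satisfy $\mathrm{diam}\, B^m_i < 2^{-m}$, with the $(m{+}1)$st partition refining the $m$th. The continuity-set condition is the crucial point: for each $x \in \mathcal{S}$ the spheres $\{y : d(x,y) = r\}$ are disjoint for distinct $r$, so at most countably many radii $r$ have $\rho(\{y : d(x,y) = r\}) > 0$; choosing good radii $< 2^{-m-1}$, using separability to extract a countable subcover of the resulting balls, then disjointifying and intersecting successive levels, yields the required refining family. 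Since each cell is a $\rho$-continuity set, the portmanteau theorem upgrades the weak convergence $\rho_n \to \rho$ to the numerical convergence $\rho_n(B^m_i) \to \rho(B^m_i)$ as $n \to \infty$, for every fixed $m$ and $i$.

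Next I would build the coupling on $\wt\O$. Fixing $m$ and truncating the countable partition at an index $i_m$ so that the tail $B^m_0 := \bigcup_{i > i_m} B^m_i$ has $\rho(B^m_0) < 2^{-m}$, I select representatives $s^m_i \in B^m_i$ and partition $[0,1)$ into consecutive intervals of lengths $\rho(B^m_i)$ (for the limit) and $\rho_n(B^m_i)$ (for each $n$), laid out in a common nested order dictated by the refinement; the maps sending $\o$ to the representative of the cell whose interval contains $\o$ define measurable $X^{(m)}$ and $X^{(m)}_n$ with the correct cell-masses. Because $\rho_n(B^m_i) \to \rho(B^m_i)$ for each $i \le i_m$, the two interval layouts at level $m$ agree off a set whose $\mathrm{Leb}$-measure tends to $0$ as $n \to \infty$; hence for $n$ beyond some $n_m$ the $\mathrm{Leb}$-measure of $\{\o : X^{(m)}_n(\o) \text{ and } X^{(m)}(\o) \text{ lie in different cells}\}$ is $< 2^{-m}$, and on the complement $d(X^{(m)}_n, X^{(m)}) < 2^{-m}$. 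Defining $X := \lim_m X^{(m)}$, where the nested cells of shrinking diameter pin down a single point (using completeness, available after passing to the completion $\overline{\mathcal{S}}$, with the limit lying in $\mathcal{S}$ a.s.\ since the law is $\rho$ and $\supp\rho \subset \mathcal{S}$), and then setting $X_n := X^{(m(n))}_n$ along a diagonal with $m(n) \to \infty$ chosen so that $n \ge n_{m(n)}$, a Borel--Cantelli argument over exceptional sets of summable measure $\sum_m 2^{-m} < \infty$ gives $d(X_n, X) \to 0$ $\mathrm{Leb}$-a.s.

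The main obstacle is precisely this last coupling-and-bookkeeping step: one must lay out the cell intervals for every $n$ in a single order compatible with the refinement across all levels $m$, verify measurability of the resulting maps and that $X_n$ carries the law $\rho_n$ \emph{exactly} (not merely approximately), which forces the interval lengths to be the true masses $\rho_n(B^m_i)$, and then reconcile this with the diagonalization $m = m(n)$ so that the per-level error estimates telescope into genuine almost-sure convergence. Completeness enters only to guarantee that the shrinking nested cells determine a point; in the present application $\mathcal{S} = C(\R; H^s(\T))$ with the compact-open topology is Polish, so this is automatic and the constructed $X$ lands in $\mathcal{S}$ without any appeal to the completion. I would finally record the standard references \cite{Bass, Dudley} for the complete details.
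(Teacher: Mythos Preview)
The paper does not prove this lemma at all: it simply states Skorokhod's theorem and refers the reader to \cite{Bass, Dudley}. Your proposal goes well beyond this, sketching the classical partition-and-coupling construction that one indeed finds in those references; the outline is essentially correct and, since you also end by deferring to \cite{Bass, Dudley} for the details, your treatment subsumes the paper's.

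One small caveat: the lemma is stated for a separable metric space, not a Polish one, and your sketch invokes completeness to ensure the nested cells converge to a point; your workaround of passing to the completion $\overline{\mathcal{S}}$ and then arguing the limit lies in $\mathcal{S}$ a.s.\ because $\rho$ is supported on $\mathcal{S}$ is a bit glib, since one must check that the constructed limit map has law exactly $\rho$ before invoking where $\rho$ lives. This is a standard technicality handled in the cited references, and in any case the application space $C(\R; H^s(\T))$ is Polish, so it does not affect the use made of the lemma in the paper.
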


It follows from the weak convergence of $\nu_{N_j}$ to $\nu$ and Lemma~\ref{LEM:Sk}
that there exist another probability space $(\wt \O, \wt \F, \wt P)$,
a sequence $\big\{ \wt {u^{N_j}}\big\}_{j \in \N}$ of $C(\R; H^s)$-valued random variables, 
and 
a $C(\R; H^s)$-valued random variable $u $
such that 
\begin{align}
\L\big(\wt {u^{N_j}}\big) = \L( u^{N_j}) = \nu_N, 
\qquad \L(u) = \nu, \label{Y8}
\end{align}
	
\noi
and $\wt {u^{N_j}}$ converges to $u$ in $C(\R; H^s(\T))$ almost surely
with respect to $\wt P$.
Then, Theorem~\ref{THM:1} follows from 
the following proposition.

\begin{proposition}\label{PROP:end}
Let $\wt u_{N_j}$, $j \in \N$,  and $u$ be as above.
Then, 
$\wt {u^{N_j}}$   and $u$
are global-in-time distributional 
solutions to 
the truncated gKdV~\eqref{gKdV2} 
and
to gKdV~\eqref{gKdV1}, respectively.
Moreover,  we have
\begin{align}
\L\big(\wt {u^{N_j}}(t)\big)  = \mu_{k, N_j}
\quad \text{and} \quad
\L(u(t)) = \Pk 
\label{inv10}
\end{align}

\noi
for any $t \in \R$.

\end{proposition}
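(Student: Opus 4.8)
The plan is to transfer the dynamical and statistical properties of the finite-dimensional truncated system to the limit, using the almost-sure convergence $\wt{u^{N_j}} \to u$ in $C(\R; H^s)$ furnished by Skorokhod's theorem together with the uniform nonlinearity bounds of Proposition~\ref{PROP:main}. First I would establish that each $\wt{u^{N_j}}$ solves the truncated gKdV~\eqref{gKdV2} in the distributional sense. Since $\L(\wt{u^{N_j}}) = \L(u^{N_j}) = \nu_{N_j}$ and $\nu_{N_j} = \Pkn \circ \Phi_{N_j}^{-1}$ by~\eqref{nu}, the random function $\wt{u^{N_j}}$ has the same law on path space as a genuine solution $\Phi_{N_j}(\phi)$ with $\phi \sim \Pkn$. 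Because solving~\eqref{gKdV2} in the distributional sense is a measurable (indeed closed) condition on $C(\R; H^s)$—testing against smooth space-time test functions and using that the map $v \mapsto \dx \P_N[(\P_N v)^k]$ is continuous on $H^s$ for fixed $N$—this property holds $\nu_{N_j}$-almost surely and hence $\wt P$-almost surely for $\wt{u^{N_j}}$.

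The heart of the argument is passing to the limit $j \to \infty$ in the weak formulation of~\eqref{gKdV2} to recover~\eqref{gKdV1}. Fixing a test function $\psi \in C^\infty_c(\R \times \T)$, I would write out the distributional identity
\begin{align*}
\int_{\R \times \T} \wt{u^{N_j}} \big(-\dt \psi + \dx^3 \psi\big)\, dx\, dt
= -\int_{\R \times \T} \P_{N_j}\big[(\P_{N_j} \wt{u^{N_j}})^k\big]\, \dx \psi\, dx\, dt,
\end{align*}
and argue that each side converges. The linear side converges from the almost-sure convergence $\wt{u^{N_j}} \to u$ in $C(\R; H^s)$ on the compact support of $\psi$. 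For the nonlinear side, the key is to show $\P_{N_j}[(\P_{N_j} \wt{u^{N_j}})^k] \to u^k$ in a sense strong enough to integrate against $\dx\psi$. Here I expect the \textbf{main obstacle}: the nonlinearity is not continuous on $H^s$ uniformly in $N$, so pointwise path convergence alone does not immediately give convergence of the truncated nonlinearities. The remedy is to exploit the uniform $L^p(\rho; H^s)$ bounds~\eqref{X1} and the difference estimate~\eqref{X2} from Proposition~\ref{PROP:main}: these give equi-integrability of $\|F_{N_j}(\wt{u^{N_j}})\|_{H^s}$ and control the discrepancy between $F_{N_j}$ and the full nonlinearity $F$, allowing one to combine the almost-sure convergence of $\wt{u^{N_j}}$ with a Vitali-type argument to conclude that the nonlinear term converges to $\int u^k\, \dx\psi$ after passing to a further subsequence if necessary. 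One must transfer these bounds from $\rho$ to the path measures via the uniform weight bound~\eqref{inv8}, since $\Pkn$ is absolutely continuous with respect to $\rho$ with $L^p$-bounded density.

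For the invariance statement~\eqref{inv10}, I would argue as follows. For the truncated equation, invariance of $\Pkn$ under $\Phi_{N_j}$ (Lemma~\ref{LEM:global}) together with~\eqref{nu} gives $\L(\wt{u^{N_j}}(t)) = \L(u^{N_j}(t)) = \Pkn$ for every $t$, since $\wt{u^{N_j}}$ and $u^{N_j}$ share the path law $\nu_{N_j}$ and evaluation at time $t$ is a measurable map on $C(\R; H^s)$. To obtain $\L(u(t)) = \Pk$, I would fix $t \in \R$ and a bounded continuous $G: H^s \to \R$ and compute
\begin{align*}
\int G(u(t))\, d\wt P
= \lim_{j \to \infty} \int G(\wt{u^{N_j}}(t))\, d\wt P
= \lim_{j \to \infty} \int G\, d\mu_{k, N_j}
= \int G\, d\Pk,
\end{align*}
where the first equality uses almost-sure convergence $\wt{u^{N_j}}(t) \to u(t)$ in $H^s$ (from convergence in $C(\R; H^s)$) with dominated convergence, the second uses the truncated invariance just established, and the third uses the convergence $\mu_{k, N_j} \to \Pk$ in~\eqref{inv9}. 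Since $G$ and $t$ are arbitrary, this yields $\L(u(t)) = \Pk$, completing the proof modulo the convergence-of-nonlinearity step flagged above.
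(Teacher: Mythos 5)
Your overall architecture tracks the paper's proof closely: the law-transfer argument for showing $\wt{u^{N_j}}$ solves~\eqref{gKdV2} a.s.\ is the same idea as the paper's observation that the $\mathcal{D}'_{t,x}$-valued defect $X_j$ has law $\dl_0$, and your invariance argument (evaluation map, truncated invariance from Lemma~\ref{LEM:global}, a.s.\ convergence of $\wt{u^{N_j}}(t)$, and~\eqref{inv9}) is correct and essentially identical to the paper's proof of~\eqref{inv10}. The genuine gap is precisely at the step you flagged as the main obstacle, and your proposed remedy does not close it. A ``Vitali-type argument'' upgrades convergence in measure to convergence of integrals given uniform integrability; it does not \emph{produce} the convergence in measure. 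Almost sure convergence of $\wt{u^{N_j}}$ to $u$ in $C(\R;H^s)$ gives no pointwise-in-$\o$ convergence of $F(\wt{u^{N_j}})$ or $F_{N_j}(\wt{u^{N_j}})$ in any topology, because for a fixed $s<\frac12$ (in particular $s<\frac12-\frac1k$, where $H^s(\T)\not\hookrightarrow L^k(\T)$) the map $v\mapsto v^k$ is not even well defined on all of $H^s$, let alone continuous; and the bounds~\eqref{X1},~\eqref{X2} are estimates in $L^p(\rho)$, not statements about the fixed paths $\o\mapsto \wt{u^{N_j}}(\o)$. So your plan supplies uniform integrability and smallness in law, but no $\o$-wise convergence to feed into Vitali.

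The missing device is the paper's auxiliary \emph{fixed} truncation parameter $M$ and the four-term decomposition
$F_{N_j}(u_j)-F(u) = \big(F_{N_j}(u_j)-F(u_j)\big) + \big(F(u_j)-F_M(u_j)\big) + \big(F_M(u_j)-F_M(u)\big) + \big(F_M(u)-F(u)\big)$,
where $u_j = \wt{u^{N_j}}$. Only the third term uses the Skorokhod a.s.\ convergence, and it works because $F_M$ (fixed, finite rank) \emph{is} continuous on $C(\R;H^s)$; the first two terms are $O(N_j^{-\eps})$ and $O(M^{-\eps})$ in $L^2(\nu_{N_j};L^2_TH^s)$, obtained by combining the invariance of $\mu_{k,N_j}$ (so the time marginals of $\nu_{N_j}$ are exactly $\mu_{k,N_j}$ at every $t$), Cauchy--Schwarz with the density bound~\eqref{inv8} and $Z_N\ges 1$, and~\eqref{X2}. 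Crucially, the last term must be estimated under the limiting measure $\nu$, which requires already knowing $\L(u(t))=\Pk$; this is why the paper proves~\eqref{inv10} \emph{before} the limit passage, the reverse of your ordering (harmless in itself, since your invariance proof never uses the solution property, but the reordering is forced). Finally, extracting a subsequence ``for a fixed $\psi$'' is insufficient: one needs a single subsequence valid for all test functions, which the paper obtains by running the estimate on dyadic intervals $[-2^\l,2^\l]$ with Markov's inequality and a diagonal argument, yielding $F_{N_j}(u_j)\to F(u)$ a.s.\ in $L^2_{t,\textup{loc}}H^s$ and hence in $\mathcal{D}'(\R\times\T)$.
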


\begin{proof}

We first prove\;\eqref{inv10}.
Fix $t \in \R$.
Let $R_t:C(\R; H^s) \to H^s$ be the evaluation map defined by $R_t(v) = v(t)$. 
Then, from
Lemma~\ref{LEM:global}, 
we have
\begin{align}
\mu_{k, N_j} = \nu_{N_j}\circ R_t^{-1}.
\label{Y10}
\end{align}

\noi
Denoting by $\nu_{N_j}^t$  the distribution of  $\wt {u^{N_j}}(t)$, 
it follows from~\eqref{Y8} and~\eqref{Y10} that 
\begin{align}
\nu_{N_j}^t = \nu_{N_j}\circ R_t^{-1} = \mu_{k, N_j} .
\label{Y11}
\end{align}

In view of  the almost sure convergence of $\wt {u^{N_j}}$  to $u$ in $C(\R; H^s)$, 
$\wt {u^{N_j}}(t)$ converges to $u(t)$ in $H^s$ almost surely
for any $t \in \R$.
Then, 
denoting by $\nu^t$  the distribution of  $u(t)$, 
it follows from 
 the dominated convergence theorem with~\eqref{Y11} 
 and\;\eqref{inv9} that 
\begin{align*}
\nu^t(A) = \int \ind_{\{u(t)(\o) \in A\}} d\wt P
= \lim_{j\to \infty} \int \ind_{\big\{\wt {u^{N_j}} (t)(\o) \in A\big\}} d\wt P
= \lim_{j \to \infty} \mu_{k, N_j} (A)
= \mu_k(A)
\end{align*}

\noi
for any $A \in \mathcal{B}_{H^s(\T)}$, $s < \frac 12$.
This proves that\;\eqref{inv10}.

Hence, it remains to show that 
$\wt {u^{N_j}}$ and $u$
are global-in-time distributional 
solutions to 
the truncated gKdV~\eqref{gKdV2}
and
to gKdV~\eqref{gKdV1}, respectively.
For $j \in \N$, define the 
$\mathcal D'_{t, x}$-valued
random variable $X_j$ by 
\begin{align*}
X_j =  \dt u^{N_j} + \dx^3  u^{N_j} - \dx \P_{N_j}\big[(\P_{N_j}u^{N_j})^k\big].
\end{align*}

\noi
Here, 
$\mathcal D'_{t, x}= \mathcal{D}'(\R\times \T)$ denotes the space of space-time distributions
on $\R\times \T$.
We define\;$\wt X_j$ 
for $\wt {u^{N_j}}$ in an analogous manner.
Noting that $u^{N_j}$ is a global  solution to~\eqref{gKdV2}, 
we see that $\L_{\mathcal D'_{t, x}}(X_j) = \dl_0$,
where $\dl_0$ denotes the Dirac delta measure.
By~\eqref{Y8}, we also have 
\[\L_{\mathcal D'_{t, x}}(\wt X_j) = \dl_0,\]

\noi
for each $j \in \N$.
In particular, 
 $\wt {u^{N_j}}$ is a global solution
to the truncated gKdV~\eqref{gKdV2}
in the distributional sense, 
almost surely with respect to $\wt P$.

Recall that 
  $\wt {u^{N_j}}$ converges to $u$ in $C(\R; H^s)$ almost surely
  with respect to $\wt P$.
Hence, we have the almost sure convergence
of the linear part:
\begin{align*}
 \dt \wt{u^{N_j}} + \dx^3  \wt {u^{N_j}} 
\ 
\longrightarrow 
\  \dt u + \dx^3 u 
\end{align*}

\noi
in $\mathcal D'(\R\times \T)$ as $j \to \infty$.

Next, we discuss the almost sure convergence of 
the truncated  nonlinearity in the distributional sense.
It suffices to  show
that 
$F_{N_j}\big(\wt{u^{N_j}}\big)= \P_{N_j}\big[(\P_{N_j}u^{N_j})^k\big]$
 converges to $F(u) = u^k$
 in the distributional sense, almost surely.
For simplicity of notation, let $F_j = F_{N_j}$
and $u_j =  \wt{u^{N_j}}$.

Fix $T>0$ and let $s < \frac 12 $.
By Lemma\;\ref{LEM:global} and Proposition~\ref{PROP:main} with\;\eqref{TG1} and\;\eqref{inv8}, 
we have
\begin{align}
\big\|\|F_j (u_j) - F(u_j)\|_{L^2_T H^s}\big\|_{L^2(\nu_{N_j})}
& = 
\big\|\|F_j (\Phi_{N_j} (t) \phi ) - F(\Phi_{N_j}(t) \phi)\|_{L^2(\mu_{k, N_j}) H^s}\big\|_{L^2_T} \notag \\
& = (2T)^\frac{1}{2}
\|F_j( \phi ) - F( \phi)\|_{L^2(\mu_{k, N_j}) H^s} \notag \\
& \les
T^\frac{1}{2} \|R_{N_j}\|_{L^4(\rho)}
\|F_j ( \phi ) - F( \phi)\|_{L^4(\rho) H^s} \notag \\
& \les T^{\frac{1}{2}} N_j^{-\eps},
\label{Y14}
\end{align}

\noi
for some small $\eps > 0$.
In the third step, we used the fact that 
$Z_N \ges 1$
in view of $Z_N = \| R_N(u) \|_{L^1(\rho)}\ \to 
 \| R_\infty(u) \|_{L^1(\rho)}>0$ as $N \to \infty$.
Fix $M \in \N$.
Then, proceeding as in\;\eqref{Y14}, 
we have
\begin{align*}
\big\|\|F (u_j) - F_M(u_j)\|_{L^2_T H^s}\big\|_{L^2(\nu_{N_j})}
& \les T^{\frac{1}{2}} M^{-\eps},
%\label{Y15}
\end{align*}

\noi
uniformly in  $j \in \N \cup \{\infty\}$.
Lastly, note that it follows from 
 the almost sure convergence
of  $\wt {u^{N_j}}$  to $u$ in $C(\R; H^s)$
and the continuity of $F_M$
that
$ F_M (u_j)$ converges to  $F_M (u)$
in\;$C(\R; H^s)$ as $j \to \infty$, almost surely with respect to $\wt P$.
Hence, by 
writing $F_j (u_j) - F(u)$ as 
\begin{align*}
F_j (u_j) - F(u) 
& = \big( F_j (u_j) - F(u_j) \big)
+  \big( F (u_j) - F_M(u_j) \big)\notag \\
& \hphantom{XXX}
+  \big( F_M (u_j) - F_M(u) \big)
+  \big( F_M (u) - F(u) \big), 
\end{align*}
	
\noi
we see that,  
after passing to a subsequence, 
 $F_j(u_j)$ converges to $F(u)$
in $L^2([-T, T]; H^s(\T))$ almost surely with respect to $\wt P$.

%Since the choice of $T>0$ was arbitrary, 
%we can apply the previous argument iteratively for $T_\l = 2^\l$, $\l\in \N$.

By iteratively applying the above argument on time intervals $[-2^\l, 2^\l]$, $\l \in \N$, 
we construct a sequence  $\{\O_\l\}_{\l \in \N}$ of sets of full probability 
with $\O_{\l+1} \subset \O_{\l} $ 
such that 
a subsequence $F_{j^{(\l+1)}}(u_{j^{(\l+1)}})(\o) $ 
of $F_{j^{(\l)}}(u_{j^{(\l)}})(\o)$ from the previous step
converges to $F(u)(\o)$
in $L^2([-2^\l, 2^\l]; H^s(\T))$ for all $\o \in \O_{\l+1}$.
Then, by a diagonal argument, 
passing to a subsequence, the term\;$F_j(u_j) $ converges to $F(u)$
in $L^2_{t, \text{loc}} H^s_x$ 
almost surely with respect to $\wt P$.
In particular, up to a subsequence, 
$F_j(u_j) $ converges to $F(u)$
in $\mathcal D'(\R\times \T)$
almost surely with respect to\;$\wt P$.
Therefore, $u$ is a global-in-time distributional solution to~\eqref{gKdV1}.
\end{proof}

\begin{remark}\label{REM:unique2}\rm
%In Theorem \ref{THM:1},  
%we first fix  $s < \frac 12$
%and construct (non-unique) global solutions $u \in C(\R; H^s(\T))$.
%Thus, the solutions $u$ may depend on the choice of $s < \frac 12$.
%In fact, one can easily modify the argument
%and construct global solutions $u \in \bigcap_{s< \frac 12} C(\R;H^s(\T)) $,
%i.e.~the solution $u$ does not depend on a specific choice of $s< \frac 12$.
%See Remark \ref{REM:unique2} below.
In the proof of Theorem \ref{THM:1} presented above, 
we first fixed $s < \frac{1}{2}$
and thus 
our solution $u$ depends on the value  of  $s < \frac 12$.
In the following, we briefly describe how to remove this dependence on $s$.

Note that the solution map $\Phi_N$ to~\eqref{gKdV2} constructed in Lemma~\ref{LEM:global} 
is independent of $s\geq 0$.
Then, 
letting $s_n = \frac 12 - \frac 1n$, $n \in \N$, 
we can view $\nu_N$ in \eqref{nu}
as a probability measure 
on 
\[C(\R; H^{\frac{1}{2}-}(\T)) :=
 \bigcap_{s < \frac 12} C(\R; H^{s}(\T)) 
=  \bigcap_{n \in \N} C(\R; H^{s_n}(\T))  
\]

\noi
endowed with the following metric
\begin{align}
 d (u, v) = \sum_{n = 1}^\infty \frac{1}{2^n} \frac{\| u - v\|_{C_t H^{s_n}}}{1+ \| u - v\|_{C_t H^{s_n}}}.
\label{nu2}
 \end{align}

It follows from the proof of Theorem \ref{THM:1} that $\{\nu_N\}_{N\in \N}$
is tight as probability measures on 
$C(\R; H^{s_n}(\T))$ for each $n \in \N$.
Then, by Prokhorov's theorem (Lemma \ref{LEM:Pro})
and a diagonal argument, 
we can extract a subsequence
 $\{\nu_{N_j}\}_{j\in \N}$
weakly convergent
to $\nu$
as probability measures on 
$C(\R; H^{s_n}(\T))$ for each $n \in \N$.
In particular, in view of \eqref{nu2}, 
 $\{\nu_{N_j}\}_{j\in \N}$
 converges weakly
to $\nu$
as  probability measures on 
$C(\R; H^{\frac{1}{2}-}(\T))$.
Then, by Skorokhod's theorem (Lemma \ref{LEM:Sk}),
 there exist another probability space $(\wt \O, \wt \F, \wt P)$,
a sequence $\big\{ \wt {u^{N_j}}\big\}_{j \in \N}$ of $C(\R; H^{\frac{1}{2}-})$-valued random variables, 
and 
a $C(\R; H^{\frac{1}{2}-})$-valued random variable $u $
such that \eqref{Y8} holds
and 
$\wt {u^{N_j}}$ converges to $u$ in $C(\R; H^{\frac{1}{2}-}(\T))$ almost surely
with respect to $\wt P$.
This in turn implies that 
$\wt {u^{N_j}}$  converges almost surely to $u$ in $C(\R; H^{s_n}(\T))$ 
for each $n \in \N$.
Finally, by applying 
Proposition \ref{PROP:end}
with some fixed regularity $s_n$, 
we conclude that $u$ is a global distributional solution to \eqref{gKdV1}
and that 
 Theorem \ref{THM:1} holds
with this particular $u\in C(\R; H^{\frac{1}{2}-}(\T))$
for any $s< \frac 12$.

%
%
%Then, Theorem~\ref{THM:1} follows from 
%the following proposition.
%
%
%
%
%is well defined independent of $s < \frac{1}{2}$.
%
%
%
%Let $\Phi_N: H^s (\T)\to C(\R; H^s(\T))  $
%be the solution map to~\eqref{gKdV2} constructed in Lemma~\ref{LEM:global}.
%By endowing $ C(\R; H^s(\T))$ with the compact-open topology,\footnote{Under the compact-open topology, a sequence $\{u_n\}_{n \in \N}\subset  C(\R; H^s(\T))$ converges
%if and only if it converges uniformly on any compact time interval.}
% it follows from  the local Lipschitz continuity of $\Phi_N(\cdot)$
%that $\Phi_N$ is continuous
%from $H^s (\T)$ into $C(\R; H^s(\T)) $.
%We now 
%define a  probability measure $\nu_N$
%on $C(\R; H^s(\T))  $ by setting
%%\begin{align}
%% \nu_N = \Pkn \circ \Phi_N^{-1}.
%%\label{nu}
%% \end{align}
%

\end{remark}

\begin{ackno}\rm
The authors would like to thank the anonymous referee
for raising a question discussed in 
 Remarks \ref{REM:unique} and \ref{REM:unique2}.
T.O.~was supported by the European Research Council (grant no.~637995 ``ProbDynDispEq'').
L.T.~was supported by the grant ``ANA\'E'' ANR-13-BS01-0010-03. 

\end{ackno}

\end{document}